\theoremstyle{plain}
\newtheorem{thm}{Theorem}[section]
\newtheorem{cor}[thm]{Corollary}
\newtheorem{lem}[thm]{Lemma}
\newtheorem{rem}[thm]{Remark}
\newtheorem{prop}[thm]{Proposition}
\newcommand{\Nbb}{\mathbb N}
\newcommand{\Rbb}{\mathbb R}
\newcommand{\nbf}{\mathbf n}
\newcommand{\nbfGamma}{\nbf_\Gamma}
\newcommand{\Norm}[2]{\Vert #1 \Vert_{#2}}
\newcommand{\SemiNorm}[2]{\vert #1 \vert_{#2}}
\DeclareMathOperator{\curl}{\nabla\times}
\DeclareMathOperator{\curlh}{\nabla_\h\times}
\let\div\relax
\DeclareMathOperator{\div}{\nabla\cdot}
\DeclareMathOperator{\dt}{\partial_t}
\newcommand{\fbf}{\mathbf f}
\newcommand{\h}{h}
\newcommand{\Qbbh}{\mathbb Q_\h}
\newcommand{\Ibb}{\mathbb I}
\newcommand{\chih}{\chi_\h}
\newcommand{\ubf}{\mathbf u}
\newcommand{\Bbf}{\mathbf B}
\newcommand{\Ebf}{\mathbf E}
\newcommand{\jbf}{\mathbf j}
\newcommand{\Hbf}{\mathbf H}
\newcommand{\PiNcalu}{\PiNcal \mathbf u}
\newcommand{\RTcal}{\mathcal{RT}}
\newcommand{\Pitildez}{\widetilde\Pi^{0,\RTcal}_k}
\newcommand{\BbfI}{\Pitildez\mathbf B}
\newcommand{\PiNcal}{\Pi^{\mathcal N}_k}
\newcommand{\PiNcalE}{\PiNcal \mathbf E}
\newcommand{\EbfI}{\mathbf E_I}
\newcommand{\jbfI}{\mathbf j_I}
\DeclareMathOperator{\eubf}{\mathbf{e}^{\mathbf u}_\h}
\DeclareMathOperator{\eBbf}{\mathbf{e}^{\mathbf B}_\h}
\DeclareMathOperator{\eEbf}{\mathbf{e}^{\mathbf E}_\h}
\DeclareMathOperator{\ejbf}{\mathbf{e}^{\mathbf j}_\h}
\let\P\relax
\DeclareMathOperator{\P}{P}
\DeclareMathOperator{\eP}{e^{\P}_\h}
\DeclareMathOperator{\PI}{P_I}
\newcommand{\omegabold}{\boldsymbol \omega}
\newcommand{\vbf}{\mathbf v}
\newcommand{\Cbf}{\mathbf C}
\newcommand{\Fbf}{\mathbf F}
\newcommand{\kbf}{\mathbf k}
\newcommand{\Gbf}{\mathbf G}
\DeclareMathOperator{\Q}{Q}
\newcommand{\mubold}{\boldsymbol \mu}
\newcommand{\ubfh}{\mathbf u_\h}
\newcommand{\Bbfh}{\mathbf B_\h}
\newcommand{\Ebfh}{\mathbf E_\h}
\newcommand{\jbfh}{\mathbf j_\h}
\newcommand{\Hbfh}{\mathbf H_\h}
\DeclareMathOperator{\Ph}{P_\h}
\DeclareMathOperator{\Qh}{Q_\h}
\newcommand{\omegaboldh}{\boldsymbol \omega_\h}
\newcommand{\vbfh}{\mathbf v_\h}
\newcommand{\Cbfh}{\mathbf C_\h}
\newcommand{\Fbfh}{\mathbf F_\h}
\newcommand{\kbfh}{\mathbf k_\h}
\newcommand{\Gbfh}{\mathbf G_\h}
\newcommand{\muboldh}{\boldsymbol \mu_\h}
\let\Re\relax
\DeclareMathOperator{\Re}{R_e}
\DeclareMathOperator{\Remo}{R_e^{-1}}
\DeclareMathOperator{\Rm}{R_m}
\DeclareMathOperator{\Rmmo}{R_m^{-1}}
\DeclareMathOperator{\cL}{c}
\newcommand{\zerobf}{\mathbf 0}
\newcommand{\Bbfz}{\Bbf^0}
\newcommand{\Bbfzh}{\Bbfz_\h}
\newcommand{\ubfz}{\ubf^0}
\newcommand{\ubfzh}{\ubfz_\h}
\DeclareMathOperator{\X}{X}
\DeclareMathOperator{\Xh}{\X_\h}
\DeclareMathOperator{\Xtildeh}{\widetilde\X_\h}
\DeclareMathOperator{\Xbarh}{\overline\X_\h}
\newcommand{\Abf}{\mathbf A}
\newcommand{\Abfh}{\mathbf A_\h}
\newcommand{\Phih}{\Phi_\h}
\newcommand{\Hzcurl}{H_0(\curl,\Omega)}
\newcommand{\Hzdiv}{H_0(\div,\Omega)}
\newcommand{\Hznabla}{H_0(\nabla,\Omega)}
\newcommand{\Hzcurlh}{H_0^\h(\curl,\Omega)}
\newcommand{\Hzbarcurlh}{\overline H_0^\h(\curl,\Omega)}
\newcommand{\Hzdivh}{H_0^\h(\div,\Omega)}
\newcommand{\Hztildediv}{\widetilde H_0(\div,\Omega)}
\newcommand{\Hztildedivh}{\widetilde H_0^\h(\div,\Omega)}
\newcommand{\Hznablah}{H_0^\h(\nabla,\Omega)}
\newcommand{\Hcurlh}{H^\h(\curl,\Omega)}
\newcommand{\Hdivh}{H^\h(\div,\Omega)}
\newcommand{\Hnablah}{H^\h(\nabla,\Omega)}
\DeclareMathOperator{\Hcalm}{\mathcal H_m}
\DeclareMathOperator{\Hcalc}{\mathcal H_c}
\DeclareMathOperator{\LHS}{LHS}
\DeclareMathOperator{\RHS}{RHS}
\DeclareMathOperator{\ds}{ds}
\DeclareMathOperator{\dr}{dr}
\newcommand{\mumo}{\mu^{-1}}
\newcommand{\cP}{c_P}
\newcommand{\Pbb}{\mathbb P}
\newcommand{\E}{K}
\newcommand{\F}{F}
\newcommand{\hE}{\h_\E}
\newcommand{\taun}{\mathcal T_\h}
\newcommand{\Fcaln}{\mathcal F_\h}
\newcommand{\Ecaln}{\mathcal E_\h}
\newcommand{\FcalE}{\mathcal F^\E}
\newcommand{\EcalE}{\mathcal E^\E}
\newcommand{\nbfE}{\mathbf n_\E}
\newcommand{\nbfF}{\mathbf n_\F}
\title{\normalsize Error estimates for a helicity-preserving finite element
discretisation of an incompressible magnetohydrodynamics system}
\author{\normalsize{L. Beir\~ao~da~Veiga\thanks{Dipartimento di Matematica e Applicazioni,
    Universit\`a degli Studi di Milano-Bicocca, Italy (lourenco.beirao@unimib.it, lorenzo.mascotto@unimib.it)}
    \thanks{IMATI-CNR, 27100, Pavia, Italy},\;
K. Hu\thanks{School of Mathematics, University of Edinburgh, UK (kaibo.hu@ed.ac.uk)},\;
L. Mascotto\footnotemark[1]
\thanks{Faculty of Mathematics, University of Vienna, 1090 Vienna, Austria}
\footnotemark[2]}}
\date{}
\begin{document}   
\maketitle
\begin{abstract}
\noindent We derive error estimates of
a finite element method
for the approximation of solutions to a seven-fields formulation
of a magnetohydrodynamics model,
which preserves the energy of the system,
and the magnetic and cross helicities on the discrete level.\\

\noindent \textbf{AMS subject classification}: 65N30; 65M60; 76W05\\

\noindent \textbf{Keywords}: resistive magnetohydrodynamics; helicity preservation;
error estimates
\end{abstract}

\section{Introduction} \label{section:introduction}
\paragraph*{State-of-the-art.}
The numerical discretisation of incompressible magnetohydrodynamics (MHD) systems
has drawn significant interest inspired by applications
in plasma physics and fusion energy research.
Finite element methods provide one of the approaches
for solving such coupled multiphysics problems.
Examples of early works can be found
in~\cite{Gunzburger-Meir-Peterson:1991,Schoetzau:2004}.
In recent years, various discussions on finite element methods
have been focused on constructing schemes
that precisely preserve certain quantities up to machine precision.
A method preserving the energy and the divergence-free condition of the magnetic field 
(magnetic Gauss law) was studied in~\cite{Hu-Ma-Xu:2017}.
Error estimates for such a method or its variants
can be derived by adapting the proofs in the framework
of the virtual element method~\cite{BeiraodaVeiga-Dassi-Manzini-Mascotto:2023},
or extending the proofs for stationary problems as in~\cite{Hu-Xu:2019,Hu-Qiu-Shi:2020}.
The method in~\cite{Hu-Ma-Xu:2017} uses the velocity~$\ubf$, the pressure~$p$,
the magnetic field~$\Bbf$, and the electric field~$\Ebf$
as main variables from a de~Rham complex.
We shall refer to this formulation as the {\it four-field scheme}.
Another approach based on the magnetic potential, as well as its convergence analysis,
can be found in~\cite{Hiptmair-Li-Mao-Zheng:2018}.
The convergence analysis for finite element discretisations of incompressible MHD systems
was also carried out in~\cite{Gao-Qiu-Sun:2023}. 
Another important aspect in the
derivation of error estimates of MHD systems
is the robustness with respect to the physical parameters,
an aspect which is beyond the scopes of the present contribution;
among others, we recall the related
contributions~\cite{BeiraodaVeiga-Dassi-Vacca:2024,Gerbeau-LeBris-Lelievre:2006}
for the stationary linearised case
and~\cite{BeiraodaVeiga-Dassi-Vacca:2024B}
for the fully nonlinear dynamic equations.

The helicity of divergence-free fields is a quantity encoding the topology
of the fields~\cite{Arnold-Keshin:2009,Moffatt:1969}.
In fluids, the fluid helicity $\int \ubf\cdot \bm w\, dx$ characterises the knots of the vorticity.
In MHD systems, two kinds of helicity exist:
the magnetic helicity $\int \Bbf\cdot \Abf\, dx$ characterises knots of the magnetic field,
where~$\Abf$ is the magnetic potential satisfying $\nabla\times \Abf=\Bbf$;
the cross helicity  $\int \Bbf\cdot \ubf\, dx$ describes knots between the vorticity
and the magnetic fields.

The helicity has a fundamental importance in various aspects of fluid mechanics and MHD,
such as turbulence~\cite{Moffatt:1981} and magnetic relaxation~\cite{Pontin-Hornig:2020},
and is conserved in ideal flows.
More precisely, the magnetic helicity is conserved as long as the magnetic diffusion vanishes,
the magnetic Reynolds number being infinity.
The general philosophy of structure-preserving and compatible discretisation
suggests that preserving helicity is important for physical fidelity.
In many important examples, there are indeed concrete reasons.
For example, a fundamental question in plasma physics is how the system evolves
with given initial data, which is related to open questions
existing today such as Parker's hypothesis~\cite{Pontin-Hornig:2020}.
Topological barriers, as encoded in helicity, constrain the behaviour
of the magnetic field under the relaxation.
Establishing a corresponding mechanism on the discrete level
is important for correctly computing the dynamical behaviours
of the plasma~\cite{He-Hu-Farrell:preparation}.
Along this direction, the work~\cite{Hu-Lee-Xu:2021}
considered a seven-fields finite element scheme
that preserves the energy, the magnetic Gauss law,
and the magnetic and cross helicities at once:
additional mixed variables, i.e.,
the magnetic field $\Hbf$, the vorticity $\omegabold$,
and the current density $\jbf$, which are actually
natural physical variables in the original system,
were added to the variables in the four-field scheme~\cite{Hu-Ma-Xu:2017}.
A key idea in that construction was to use $L^{2}$ projections
into proper spaces from a de~Rham sequence.
This approach was also explored by Rebholz to derive a scheme
that preserves the fluid helicity for the Navier-Stokes equations~\cite{Rebholz:2007}.
The construction was extended to the Hall MHD system~\cite{Laakmann-Hu-Farrell:2023}.
Other recent works on preserving the helicity for the MHD or the Navier-Stokes equations
can be found in~\cite{Gawlik-GayBalmaz:2022,Zhang-Palha-Gerritsma-Rebholz:2022}. 

Nevertheless, it is also important to understand
the {\it behaviour and limit of structure-preserving schemes}.
The $L^{2}$ projections played an important role
in the construction of the method proposed in~\cite{Hu-Lee-Xu:2021}.
Although convergence was observed in the numerical tests~\cite{Hu-Lee-Xu:2021},
the convergence in general situations with various norms is not completely clear.
In fact, it is not difficult to imagine that structure-preserving methods must have a limit.
For example, for the Navier-Stokes equation,
the Onsager conjecture~\cite{Onsager:1949,Constantin-Weinan-Titi:1994,Isett:2018}
claims that under certain conditions,
the energy conservation does not hold on the continuous level
technically because the regularity required by the integration by parts
is not valid for rough solutions.
The issue concerned by the Onsager conjecture is closely related
to the mathematical properties of the Navier-Stokes equations and turbulence,
and is thus of both mathematical and physical importance.
The original Onsager conjecture is concerned with the energy conservation of fluids.
However, various versions of theorems and conjectures in the same spirit
exist for the helicity preservation for both
the Navier-Stokes and the MHD equations~\cite{Shvydkoy:2010}.
Despite the fact that quantities may not be conserved on the continuous level,
most finite element methods for the Navier-Stokes equations
preserve the energy by construction.
Therefore, they cannot be used to compute certain classes of rough solutions,
and structure-preserving methods might play the opposite role
by producing spurious solutions in such scenarios.
However, to the best of our knowledge, this issue was not
extensively discussed in the literature, with a few exceptions,
see, e.g., \cite{Fehn-Kronbichler-Munch-Wall:2022}. 
The above motivation indicates that investigating
convergence issues of energy-
and helicity-preserving schemes is a critical aspect for reliable MHD computations.

\paragraph*{Contributions of this paper.}
Here,
we provide the first error estimates
for the recent families of energy-helicity-preserving finite element schemes
for incompressible MHD equations~\cite{Hu-Lee-Xu:2021}.
The analysis takes inspiration
from that in the virtual element method framework~\cite{BeiraodaVeiga-Dassi-Manzini-Mascotto:2023},
here combined with the specific challenges of the $7$-field formulation.
We focus on error estimates for 
the semi-discrete in space method.
The fully discrete scheme in~\cite{Hu-Lee-Xu:2021}
used a mid-point rule discretization in time,
which is only an example of a larger family of time integrators
preserving quadratic invariants.
The extension of the error bounds
proposed in this work to the fully discrete case
may carry major or minor challenges
depending on the particular choice of the time integrator.

\paragraph*{Notation.}
Given~$v:\Rbb^3\to\Rbb$ and~$\vbf : \Rbb^3 \to \Rbb^3$, we define
\[
\begin{split}
\nabla v := (\partial_x v, \partial_y v, \partial_z v)^T,
\qquad\qquad
\div \vbf := \partial_x v_z + \partial_y v_2 + \partial_z v_3,\\
\curl \vbf :=  (\partial_y v_3 - \partial_z v_2,
                \partial_z v_1 - \partial_x v_1,
                \partial_x v_2 - \partial_y v_1)^T.
\end{split}
\]
Given~$D$ a Lipschitz domain in~$\Rbb^3$
with diameter~$\h_D$,
we introduce~$H^s(\nabla,D)$ as the usual Sobolev space of positive order~$s$.
For~$s=1$, we omit the Sobolev order and write~$H(\nabla,D)$.
The Sobolev space of order~$s=0$ is the usual Lebesgue space~$L^2(D)$;
its subspace of functions with zero average over~$D$ is~$L^2_0(D)$.

We denote the Sobolev inner product, seminorm, and norm by
\[
(\cdot,\cdot)_{s,D},\qquad\qquad\qquad
\SemiNorm{\cdot}{s,D},\qquad\qquad\qquad
\Norm{\cdot}{s,D}.
\]
Henceforth, whenever clear, we shall omit the dependence on the domain~$D$.
We shall further omit the Sobolev index~$s$ when~$s=0$.

On the boundary~$\partial D$ of~$D$,
we define the space~$H^{\frac12}(\partial D)$
as the image of~$H(\nabla,D)$ through the standard trace operator.
The dual space of~$H^{\frac12}(\partial D)$ is given by~$H^{-\frac12}(\partial D)$.

The spaces of $L^2(D)$ functions with curl and divergence in~$L^2(D)$ are~$H(\curl,D)$ and~$H(\div,D)$,
which we endow with the norms
\[
\Norm{\cdot}{\curl,D}^2 
:= \h_D^{-2} \Norm{\cdot}{0,D}^2 + \Norm{\curl(\cdot)}{0,D}^2,
\qquad\qquad
\Norm{\cdot}{\div,D}^2 
:= \h_D^{-2} \Norm{\cdot}{0,D}^2 + \Norm{\div(\cdot)}{0,D}^2.
\]
For any positive~$s$, $H^s(\curl,D)$ and $H^s(\div,D)$ denote
the subspaces of~$H(\curl,D)$ and~$H(\div,D)$ of~$H^s(D)$ functions
with~$\curl$ and~$\div$ in~$H^s(D)$.

There exist trace operators from~$H(\curl,D)$ and~$H(\div,D)$
into $H^{-\frac12}(\partial D)$; see, e.g., \cite{Monk-2003, Buffa-Ciarlet:2001}.
We introduce~$H_0(\nabla,D)$, $H_0(\curl,D)$, and~$H_0(\div,D)$
as the subspaces of functions in~$H(\nabla,D)$, $H(\curl,D)$, and~$H(\div,D)$
with zero standard, tangential, and normal traces
in~$H^{\frac12}(\partial D)$, $H^{-\frac12}(\partial D)$,
and~$H^{-\frac12}(\partial D)$, respectively.

For a given positive~$T$, a nonnegative $\ell$,
an integer~$s$, and a Hilbert space~$H$,
we shall also use the Bochner spaces~$H^\ell(0,T; H)$
and~$W^{s,\infty}(0,T;H)$.
The latter is a Sobolev space with finite norm
\[
\Norm{v}{W^{s,\infty}(0,T;H)}
:= \text{essSup}_{t\in(0,T)} \Norm{\partial_s v(\cdot,t)}{H}.
\]

\paragraph*{Meshes.}
In what follows, $\{\taun\}$ denotes a sequence of conforming tetrahedral tessellation
of a given polyhedral domain.
We assume that~$\{\taun\}$ is uniformly shape-regular
with shape-regularity parameter~$\sigma$.
The sets of faces and edges of~$\taun$ are~$\Fcaln$ and~$\Ecaln$.
Given~$\E$ in~$\taun$, $\FcalE$ and~$\EcalE$ are its sets of faces and edges;
$\nbfE$ is its outward unit normal vector;
$\hE$ is its diameter.
The maximum of all~$\hE$ is denoted by~$\h$.
With each face~$\F$ in~$\Fcaln$, we fix once and for all~$\nbfF$
as one of the two unit normal vectors;
if~$\F$ belongs also to~$\FcalE$ for a given element~$\E$,
we have~$\nbfE{}_{|\F} = \pm \nbfF$.
The space of piecewise polynomials of maximum degree~$\ell$
over~$\taun$ is denoted by~$\Pbb_k(\taun)$.

The forthcoming analysis also works on tensor product meshes
with minor modifications;
we stick to the simplicial case for the presentation's sake.

\paragraph*{Outline.}
We present the seven-fields formulation of the MHD model in Section~\ref{section:model-problem};
there, we also describe certain quantities (energy, magnetic and cross helicities) that are preserved.
In Section~\ref{section:method},
we recall the FEM from~\cite{Hu-Lee-Xu:2021}
for the approximation of solutions to the MHD model under consideration;
we further show that the energy, and the magnetic and cross helicities
are preserved on the discrete level.
We exhibit optimal a priori estimates for the semi-discrete scheme
under suitable regularity assumptions on the exact solution
to the continuous problem in Section~\ref{section:convergence-semi-discrete}.
Appendix~\ref{appendix:helicity} is concerned with recalling the proof
of the discrete helicity-preservation.

\section{The model problem} \label{section:model-problem}
Let~$\Omega$ be a contractible, Lipschitz polyhedral domain in~$\Rbb^3$ with boundary~$\Gamma$;
$\nbfGamma$ the unit outward unit vector to~$\Gamma$;
$\Re$ and~$\Rm$ the fluid and magnetic Reynolds numbers;
$\cL$ the coupling number given by the ratio of the Alven and fluid speeds;
$\mu$ the permeability of the medium in~$\Omega$.

Consider the following MHD system of equations in~$\Omega\times(0,T]$:
find $(\ubf, \omegabold, \jbf, \Ebf, \Hbf, \Bbf, \P)$ such that
\begin{subequations} \label{MHD-strong}
\begin{align}
\dt \ubf - \ubf\times\omegabold + \Remo \curl\curl\ubf - \cL \jbf\times\Bbf +\nabla\P &= \fbf \label{MHD-strong-a}\\
\jbf -  \curl \Hbf &= \zerobf \label{MHD-strong-b}\\
\dt \Bbf + \curl \Ebf &= \zerobf \label{MHD-strong-c}\\
\Rmmo \jbf - (\Ebf + \ubf \times \Bbf) &= \zerobf \label{MHD-strong-d}\\
\div \ubf &= \zerobf \label{MHD-strong-e}\\
\omegabold - \curl \ubf &= \zerobf \label{MHD-strong-f}\\
\Bbf -\mu\Hbf &= \zerobf\label{MHD-strong-g}.
\end{align}
\end{subequations}
In words, we look for a current density~$\jbf$;
electric and magnetic fields~$\Ebf$ and~$\Bbf$
that induce the Lorentz force~$\cL (\jbf \times \Bbf)$ acting on a conductive fluid (plasma)
with pointwise velocity field~$\ubf$ and scalar total pressure~P;
a vorticity of the plasma~$\omegabold$;
a magnetic induction~$\Hbf$.
Given~$p$ the pointwise pressure of the plasma,
we define the total pressure as
\[
\P := p + \vert \ubf \vert^2.
\]
The first five equations in~\eqref{MHD-strong} are standard in MHD formulations:
the first one is a fluid momentum balance
equation
where the last term on the left-hand side represents
the Lorentz force generated by the electromagnetic fields;
the second is Amp\'ere's circuital law;
the third is Faraday's law;
the fourth is Ohm's law neglecting the contributions of the Coulomb force;
the fifth represents the mass conservation of the plasma,
i.e., the incompressibility of the plasma.
In particular, model~\eqref{MHD-strong} is the MHD system
in vorticity formulation with explicit constitutive laws.

We endow the above system with the initial conditions
\begin{equation} \label{initial-conditions}
\ubfz(\cdot):= \ubf(\cdot,0),
\qquad\qquad\qquad
\Bbfz(\cdot):= \Bbf(\cdot,0)
\quad\text{such that}\quad \div \Bbfz(\cdot)=0
\quad \text{in } \Omega,
\end{equation}
and the boundary conditions on~$\Gamma$
\begin{equation} \label{boundary-conditions}
\ubf\times\nbfGamma = \zerobf,
\qquad\qquad
\P = 0,
\qquad\qquad
\Bbf \cdot \nbfGamma = \zerobf ,
\qquad\qquad
\Ebf \times \nbfGamma = \zerobf .
\end{equation}
Equation~\eqref{MHD-strong-c} and the fact that~$\Bbfz$
is divergence free in~$\Omega$, see~\eqref{initial-conditions},
also imply that~$\Bbf$ is divergence free for all times.

The third and fourth conditions in~\eqref{boundary-conditions}
are ``physical'' boundary conditions for the magnetic and electric fields;
instead, the first and second ones are instrumental
for the formulation of the motion fluid equation~\eqref{MHD-strong-a}
in Lamb form~\cite{Lamb:1924}.

We introduce the space
\begin{equation} \label{space:X}
\X := [\Hzcurl]^5 \times \Hzdiv \times \Hznabla
\end{equation}
and the bilinear form
\[
a(\ubf,\vbf) := (\curl \ubf, \curl \vbf)
\qquad\qquad\qquad
\forall \ubf,\vbf \in \Hzcurl.
\]
A weak formulation of~\eqref{MHD-strong} reads:
find $(\ubf, \omegabold, \jbf, \Ebf, \Hbf, \Bbf, \P)$ in~$\X$ such that
\begin{subequations} \label{MHD-weak}
\begin{align}
(\dt \ubf,\vbf) - (\ubf\times\omegabold,\vbf) + \Remo a(\ubf,\vbf) - \cL (\jbf\times\Bbf,\vbf) + (\nabla\P,\vbf)                                       &= (\fbf,\vbf) \label{MHD-weak-a} \\
\mu(\jbf,\kbf) - (\Bbf,\curl\kbf)              &= 0 \label{MHD-weak-b} \\
(\dt \Bbf,\Cbf) + (\curl \Ebf,\Cbf)            &= 0 \label{MHD-weak-c} \\
(\Rmmo \jbf - [\Ebf + \ubf \times \Bbf], \Gbf) &= 0 \label{MHD-weak-d} \\
(\ubf,\nabla \Q)                               &= 0 \label{MHD-weak-e} \\
(\omegabold,\mubold) - (\ubf,\curl\mubold)     &= 0 \label{MHD-weak-f} \\
(\Bbf,\Fbf) -\mu(\Hbf,\Fbf)                    &= 0 \label{MHD-weak-g}
\end{align}
\end{subequations}
for all $(\vbf,\mubold, \kbf, \Fbf, \Gbf, \Cbf, \Q)$ in~$\X$.
Equation~\eqref{MHD-weak-b} is derived
from equations~\eqref{MHD-strong-b} and~\eqref{MHD-strong-g}.

An existence result is given in~\cite[Proposition 2.19]{Gerbeau-LeBris-Lelievre:2006}
for specific choices of the initial and boundary conditions.
A uniqueness result can be found in~\cite[Section 2.2.2.4]{Gerbeau-LeBris-Lelievre:2006}
for small times under suitable assumptions on the data.

\subsection{Preservation of the energy, and the magnetic and cross helicities} \label{subsection:helicities}

\paragraph*{Preservation of the energy.}
The MHD system~\eqref{MHD-weak} preserves the energy
as long as homogeneous boundary conditions as in~\eqref{boundary-conditions} are imposed;
see, e.g., \cite[Theorem~1]{Hu-Lee-Xu:2021} for a proof.
\begin{thm} \label{theorem:energy-preservation-continuous}
The following identity holds true:
\[
\frac12\dt \Norm{\ubf}{}^2
+ \frac{\cL\mumo}{2} \dt\Norm{\Bbf}{}^2
+ \Remo \Norm{\curl \ubf}{}^2
+ \cL \Rmmo \Norm{\jbf}{}^2
= (\fbf,\ubf).
\]
\end{thm}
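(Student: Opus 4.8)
The plan is the classical energy argument: test the weak formulation~\eqref{MHD-weak} with the seven unknowns themselves and combine the resulting identities, cancelling the nonlinear couplings via the scalar triple product identity $\mathbf a\cdot(\mathbf b\times\mathbf c)=\mathbf b\cdot(\mathbf c\times\mathbf a)=\mathbf c\cdot(\mathbf a\times\mathbf b)$.

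First I would take $\vbf=\ubf$ in~\eqref{MHD-weak-a}. The convective term vanishes pointwise, since $\ubf\times\omegabold$ is orthogonal to $\ubf$, so $(\ubf\times\omegabold,\ubf)=0$; the pressure term vanishes by~\eqref{MHD-weak-e} with $\Q=\P$, which gives $(\nabla\P,\ubf)=(\ubf,\nabla\P)=0$; and $a(\ubf,\ubf)=\Norm{\curl\ubf}{}^2$. Using $(\dt\ubf,\ubf)=\frac12\dt\Norm{\ubf}{}^2$, this yields
\[
\frac12\dt\Norm{\ubf}{}^2+\Remo\Norm{\curl\ubf}{}^2-\cL(\jbf\times\Bbf,\ubf)=(\fbf,\ubf).
\]
Next I would rewrite the Lorentz term as $(\jbf\times\Bbf,\ubf)=-(\ubf\times\Bbf,\jbf)$ through the triple product identity, and use~\eqref{MHD-weak-d} with $\Gbf=\jbf$ to obtain $(\ubf\times\Bbf,\jbf)=\Rmmo\Norm{\jbf}{}^2-(\Ebf,\jbf)$; hence $-\cL(\jbf\times\Bbf,\ubf)=\cL\Rmmo\Norm{\jbf}{}^2-\cL(\Ebf,\jbf)$. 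It then remains to identify $(\Ebf,\jbf)$ with a time derivative of $\Norm{\Bbf}{}^2$: taking $\kbf=\Ebf$ in~\eqref{MHD-weak-b} gives $\mu(\jbf,\Ebf)=(\Bbf,\curl\Ebf)$, whereas taking $\Cbf=\Bbf$ in~\eqref{MHD-weak-c} gives $(\Bbf,\curl\Ebf)=-\frac12\dt\Norm{\Bbf}{}^2$; combining the two, $(\Ebf,\jbf)=-\frac{\mumo}{2}\dt\Norm{\Bbf}{}^2$. Substituting into the displayed identity produces exactly the claimed energy balance. (Here~\eqref{MHD-weak-g}, i.e.\ $\Bbf=\mu\Hbf$, and~\eqref{MHD-weak-f} are not needed, consistently with the fact that the energy involves only $\ubf$, $\Bbf$, $\jbf$, $\curl\ubf$.)

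I do not expect a genuine obstacle, as this is a structure-preserving ``testing'' computation; the only points requiring care are that the homogeneous boundary conditions~\eqref{boundary-conditions} make all integration-by-parts boundary contributions vanish — which is already built into the definition of $\X$ in~\eqref{space:X}, so that the integrations by parts used in~\eqref{MHD-weak-b}--\eqref{MHD-weak-c} and in the pressure term are legitimate — and the temporal regularity needed to pass from $(\dt\ubf,\ubf)$ and $(\dt\Bbf,\Bbf)$ to $\frac12\dt\Norm{\ubf}{}^2$ and $\frac12\dt\Norm{\Bbf}{}^2$, which we assume on solutions of~\eqref{MHD-weak}.
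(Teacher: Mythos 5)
Your proof is correct and follows essentially the same energy-testing argument as the paper, which for the continuous statement defers to \cite[Theorem~1]{Hu-Lee-Xu:2021} and carries out precisely this computation in the discrete setting (Theorem~\ref{theorem:energy-preservation-discrete}): test with $\vbf=\ubf$, $\Cbf=\Bbf$, use Ohm's law and Amp\`ere's law to convert the Lorentz term into $\cL\Rmmo\Norm{\jbf}{}^2+\frac{\cL\mumo}{2}\dt\Norm{\Bbf}{}^2$, and kill the pressure and convective terms. Your choices of test functions and cancellations (including the observation that \eqref{MHD-weak-f} and \eqref{MHD-weak-g} are not needed in the continuous case) all check out.
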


\paragraph*{Preservation of the magnetic and cross helicities.}
Given the magnetic field~$\Bbf$, let~$\Abf$ be an associated magnetic potential, i.e.,
\[
\curl\Abf=\Bbf.
\]
Given also the velocity field~$\ubf$,
we define the magnetic and cross helicities as
\[
\Hcalm:= (\Abf,\Bbf),
\qquad\qquad\qquad
\Hcalc:= (\ubf, \Bbf).
\]
We have the following noteworthy properties of the MHD system~\eqref{MHD-weak};
see , e.g., \cite[Lemma~1]{Hu-Lee-Xu:2021} for a proof.

\begin{thm} \label{theorem:helicity-preservation-continuous}
The following identities hold true:
\small{\[
\begin{split}
& \dt \Hcalm = -((\dt\Abf+2\Ebf)\times\Abf, \nbfGamma )_{0,\Gamma}
                - 2\Rmmo\mumo (\Bbf, \curl \Bbf) ,\\
& \dt \Hcalc = ( [\ubf\times\Bbf]\times\ubf - \P \Bbf -\Rmmo (\curl\Bbf)\times\ubf \Remo \omegabold\times\Bbf, \nbfGamma)_{0,\Gamma} 
    + (\fbf, \Bbf)
    - (\Remo+\Rmmo\mumo) (\curl\Bbf,\curl\ubf).
\end{split}
\]}\normalsize{}
\end{thm}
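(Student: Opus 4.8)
The plan is to differentiate $\Hcalm$ and $\Hcalc$ in time, insert the evolution and constitutive equations of~\eqref{MHD-strong}, and reduce everything to $L^2$ volume inner products and boundary integrals by repeated use of the Green identity $(\curl\Fbf,\Gbf)-(\Fbf,\curl\Gbf)=(\Fbf\times\Gbf,\nbfGamma)_{0,\Gamma}$, applied to pairs of vector fields smooth enough for it to hold (the solution of~\eqref{MHD-strong} being assumed sufficiently regular). For $\Hcalm$ I would first fix a time-differentiable magnetic potential $\Abf(t)$, e.g.\ obtained from a fixed bounded lifting of $\Bbf(t)$, so that $\curl\dt\Abf=\dt\Bbf$; note that $\Hcalm=(\Abf,\Bbf)$ is independent of the chosen gauge because $\div\Bbf=0$ and $\Bbf\cdot\nbfGamma=0$ on~$\Gamma$, so the left-hand side is unambiguous.

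\textbf{Magnetic helicity.} Starting from $\dt\Hcalm=(\dt\Abf,\Bbf)+(\Abf,\dt\Bbf)$, I would use $\Bbf=\curl\Abf$ together with $\curl\dt\Abf=\dt\Bbf$ and the Green identity to rewrite $(\dt\Abf,\Bbf)=(\Abf,\dt\Bbf)-(\dt\Abf\times\Abf,\nbfGamma)_{0,\Gamma}$, so that $\dt\Hcalm=2(\Abf,\dt\Bbf)-(\dt\Abf\times\Abf,\nbfGamma)_{0,\Gamma}$. Then I would substitute Faraday's law~\eqref{MHD-strong-c} and integrate by parts once more: $(\Abf,\dt\Bbf)=-(\curl\Abf,\Ebf)+(\Abf\times\Ebf,\nbfGamma)_{0,\Gamma}=-(\Bbf,\Ebf)+(\Abf\times\Ebf,\nbfGamma)_{0,\Gamma}$. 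Merging the two surface terms into $-((\dt\Abf+2\Ebf)\times\Abf,\nbfGamma)_{0,\Gamma}$ leaves $\dt\Hcalm=-2(\Bbf,\Ebf)-((\dt\Abf+2\Ebf)\times\Abf,\nbfGamma)_{0,\Gamma}$. Finally I would rewrite $(\Bbf,\Ebf)$: Ohm's law~\eqref{MHD-strong-d} gives $\Ebf=\Rmmo\jbf-\ubf\times\Bbf$, the scalar triple product kills $(\Bbf,\ubf\times\Bbf)$, so $(\Bbf,\Ebf)=\Rmmo(\Bbf,\jbf)$; then~\eqref{MHD-strong-b} and~\eqref{MHD-strong-g} give $\jbf=\curl\Hbf=\mumo\curl\Bbf$ (with $\mu$ constant), whence $(\Bbf,\Ebf)=\Rmmo\mumo(\Bbf,\curl\Bbf)$ and the first identity follows.

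\textbf{Cross helicity.} From $\dt\Hcalc=(\dt\ubf,\Bbf)+(\ubf,\dt\Bbf)$ I would process the two terms separately. Testing the momentum equation~\eqref{MHD-strong-a} against $\Bbf$: the Lorentz term drops since $(\jbf\times\Bbf,\Bbf)=0$; with $\div\Bbf=0$ the pressure term gives $-(\nabla\P,\Bbf)=-(\P\Bbf,\nbfGamma)_{0,\Gamma}$; and, using $\omegabold=\curl\ubf$ from~\eqref{MHD-strong-f} and the Green identity, $-\Remo(\curl\curl\ubf,\Bbf)=-\Remo(\curl\ubf,\curl\Bbf)-\Remo(\omegabold\times\Bbf,\nbfGamma)_{0,\Gamma}$. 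For $(\ubf,\dt\Bbf)$, Faraday's law and one integration by parts give $-(\Ebf,\curl\ubf)-(\Ebf\times\ubf,\nbfGamma)_{0,\Gamma}$; substituting $\Ebf=\Rmmo\mumo\curl\Bbf-\ubf\times\Bbf$ yields the volume term $-\Rmmo\mumo(\curl\Bbf,\curl\ubf)+(\ubf\times\Bbf,\curl\ubf)$ and the surface term $-\Rmmo\mumo((\curl\Bbf)\times\ubf,\nbfGamma)_{0,\Gamma}+((\ubf\times\Bbf)\times\ubf,\nbfGamma)_{0,\Gamma}$. The crucial cancellation is that the convective volume contributions vanish: by cyclicity of the scalar triple product together with $\omegabold=\curl\ubf$, $(\ubf\times\omegabold,\Bbf)=-(\ubf\times\Bbf,\curl\ubf)$. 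Collecting the survivors gives $(\fbf,\Bbf)-(\Remo+\Rmmo\mumo)(\curl\Bbf,\curl\ubf)$ plus the boundary terms $-(\P\Bbf,\nbfGamma)_{0,\Gamma}$, $-\Remo(\omegabold\times\Bbf,\nbfGamma)_{0,\Gamma}$, $((\ubf\times\Bbf)\times\ubf,\nbfGamma)_{0,\Gamma}$, and $-\Rmmo\mumo((\curl\Bbf)\times\ubf,\nbfGamma)_{0,\Gamma}$, which is the second identity.

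\textbf{Main obstacle.} The computation is essentially careful bookkeeping; the two delicate points are (i) tracking the signs across the three integrations by parts in the magnetic-helicity argument and recombining the three boundary contributions into the single expression $-((\dt\Abf+2\Ebf)\times\Abf,\nbfGamma)_{0,\Gamma}$, and (ii) recognising the triple-product cancellation of the nonlinear convective terms in the cross-helicity argument once $\omegabold$ is replaced by $\curl\ubf$. A secondary point is to make precise the regularity hypotheses under which all the Green identities are licit (e.g.\ $\curl\ubf\in H(\curl,\Omega)$ and $\Abf$ differentiable in time), consistent with the existence/uniqueness framework recalled from~\cite{Gerbeau-LeBris-Lelievre:2006}, and to simply impose them as standing assumptions for this theorem.
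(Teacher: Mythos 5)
Your proof is correct and follows the standard route: the paper itself does not prove Theorem~\ref{theorem:helicity-preservation-continuous} but defers to \cite[Lemma~1]{Hu-Lee-Xu:2021}, and your computation (time differentiation, repeated use of the Green identity $(\curl\Fbf,\Gbf)-(\Fbf,\curl\Gbf)=(\Fbf\times\Gbf,\nbfGamma)_{0,\Gamma}$, and the scalar-triple-product cancellations after replacing $\omegabold=\curl\ubf$ and $\Ebf=\Rmmo\mumo\curl\Bbf-\ubf\times\Bbf$) is exactly the continuous analogue of the discrete argument reproduced in Appendix~\ref{appendix:helicity}. As a by-product, your derivation shows that the cross-helicity boundary term should read $-\Rmmo\mumo(\curl\Bbf)\times\ubf-\Remo\,\omegabold\times\Bbf$, i.e.\ the statement as printed is missing a minus sign and a factor $\mumo$; this is a typo in the theorem, not a gap in your argument.
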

\begin{rem} \label{remark:helicity-continuous}
The right-hand sides in the identities of Theorem~\ref{theorem:helicity-preservation-continuous} above vanish if:
\begin{itemize}
    \item suitable homogeneous boundary conditions are imposed;
    \item there is no source term~$\fbf$ in the fluid motion equation;
    \item we consider the ideal MHD system, i.e.,
    \eqref{MHD-weak} imposing (formally) $\Re=\Rm=\infty$.
\end{itemize}
A formulation with constant in time magnetic and cross helicities
is called \emph{helicity-preserving}.
\end{rem}

\section{The method} \label{section:method}
We introduce a finite element method
for the approximation of solutions to~\eqref{MHD-weak},
including a careful description of the discrete version
of the space~$\X$ in~\eqref{space:X},
and discuss some important properties,
including the energy, and the magnetic and cross helicities preservation
on the discrete level.

\subsection{Discrete spaces, discrete operators and various approximants} 
\label{subsection:discrete-spaces}

In this section, we review known results from the literature and derive additional technical estimates,
which will be needed in the sequel.
We consider finite element spaces~$\Hznablah$, $\Hzcurlh$, and~$\Hzdivh$
that retain the conformity of~$\Hznabla$, $\Hzcurl$, and~$\Hzdiv$, respectively.
Notably, for~$k$ in $\Nbb_0$,
we take the usual Lagrange, first kind N\'ed\'elec, and Raviart-Thomas elements
of order~$k+1$, $k$, and~$k$, respectively, over a given mesh~$\taun$.
On each element~$\E$, these spaces are endowed with the following degrees of freedom:
\begin{itemize}
    \item (\emph{Lagrange elements}) the point values at 
    equally distributed Lagrangian points fixing a polynomial of degree~$k+1$;
    such degrees of freedom are well defined for functions in~$H^{\frac32+\varepsilon}(\E)$ for any arbitrarily small and positive~$\varepsilon$;
    \item (\emph{N\'ed\'elec elements}) (scalar) edge moments of the tangential components up to order~$k$,
    face moments of the (2-vector) tangential components up to order~$k-1$,
    and (vector) elemental moments up to order~$k-2$;
    such degrees of freedom are well defined
    (up to an edge-to-cell lifting of the degrees of freedom
    \cite[Sect. 17.3]{Ern-Guermond:2021A})
    for vector fields
    in~$[H^{\frac12+\delta}(\E)]^3$ ($\delta$ arbitrarily small and positive)
    such that their $\curl$ belongs to~$[L^q(\E)]^3$
    ($q$ larger than~$2$), see~\cite[eq. (7)]{Boffi-Gastaldi:2006};
    \item (\emph{Raviart-Thomas elements}) (scalar) face moments of the normal components up to order~$k$
    and (vector) elemental moments up to order~$k-1$;
    such degrees of freedom are well defined
    (up to a face-to-cell lifting of the degrees of freedom
    \cite[Sects. 17.1 and 17.2]{Ern-Guermond:2021A})
    for vector fields in~$H(\div,\E)\cap [L^q(\E)]^3$ ($q$ larger than~$2$),
    see \cite[Section~$2.5.1$]{Boffi-Brezzi-Fortin:2013}.
\end{itemize}
The corresponding global spaces are constructed
by $H^1$, $H(\curl)$, and $H(\div)$ conforming coupling
of their local counterparts~\cite{Ern-Guermond:2021A}.
Suitable zero traces over~$\partial\Omega$ are naturally enforced in the above spaces.
The corresponding spaces with free traces
are denoted by~$\Hcurlh$, $\Hdivh$, and~$\Hnablah$, respectively.

\paragraph*{Exact sequences.}
In what follows, we shall use the following continuous
\[
\Hznabla
\quad\overset{\nabla}{\longrightarrow}\quad
\Hzcurl
\quad\overset{\curl}{\longrightarrow}\quad
\Hzdiv
\quad\overset{\div}{\longrightarrow}\quad
L^2_0(\Omega)
\quad\overset{0}{\longrightarrow}\quad
0
\]
and discrete
\begin{equation} \label{discrete-exact-sequence}
\Hznablah
\quad\overset{\nabla}{\longrightarrow}\quad
\Hzcurlh
\quad\overset{\curl}{\longrightarrow}\quad
\Hzdivh
\quad\overset{\div}{\longrightarrow}\quad
\Pbb_{k}(\taun)\setminus\Rbb
\quad\overset{0}{\longrightarrow}\quad
0
\end{equation}
exact sequence structures
(recall that $\Omega$ is contractible).

\paragraph*{An interpolation-type operator in~$H(\nabla)$.}
The Lagrangian interpolant of order~$k+1$ of~$\P$ in $\Hznabla\cap H^s(\nabla,\Omega)$, $s>3/2$,
is defined as the unique function~$\PI$ in~$\Hznablah$ satisfying
\begin{equation} \label{definition:Lagrangian-interpolant}
(\P-\PI)(\nu) = 0 
\qquad\qquad
\forall \nu \text{ Lagrangian nodes of order } k+1.
\end{equation}
Under extra regularity assumptions, local interpolation estimates are standard;
see, e.g., \cite{Ciarlet:2002}.
\begin{lem} \label{lemma:Lagrangian-interpolant}
Let~$\E$ be an element of a regular simplicial tessellation~$\taun$ of~$\Omega$.
Given~$\P$ in $H^{k+2}(\nabla,\E) \cap H_0(\nabla,\E)$,
let~$\PI$ be its Lagrange interpolant.
Then, there exists a positive constant~$C$ independent of~$\hE$ and~$\P$
but dependent on the shape-regularity parameter~$\sigma$ of the mesh
and the polynomial degree~$k$ such that
\[
\Norm{\nabla(\P-\PI)}{0,\E}
\le C \hE^{k+1} \SemiNorm{\P}{k+2,\E},
\qquad\qquad
\Norm{\P-\PI}{0,\E}
\le C \hE^{k+2} \SemiNorm{\P}{k+2,\E}.
\]
\end{lem}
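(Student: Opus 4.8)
The plan is to derive the estimate on a single element $\E$ via the standard Bramble--Hilbert / scaling argument applied to the local Lagrange interpolation operator. First I would recall that the local Lagrange interpolant $\PI$ is well defined on $H^{k+2}(\nabla,\E)$ because, by Sobolev embedding in three dimensions, $H^{k+2}(\E) \hookrightarrow C^0(\overline\E)$ (indeed $k+2 \ge 2 > 3/2$), so the point evaluations at the Lagrangian nodes make sense; moreover $\PI$ reproduces polynomials of degree $\le k+1$, i.e.\ $\PI q = q$ for all $q \in \Pbb_{k+1}(\E)$. This polynomial-preservation property is the crucial ingredient.

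Next I would pass to the reference element $\Ehat$ via the affine map $\F_\E : \Ehat \to \E$. Pulling back, the local interpolation operator commutes with the affine pullback (Lagrange nodes map to Lagrange nodes), so it suffices to prove the estimate on $\Ehat$, where the interpolation operator $\widehat\Pi$ is a fixed bounded linear operator from $H^{k+2}(\Ehat)$ into $\Pbb_{k+1}(\Ehat)$ that fixes $\Pbb_{k+1}(\Ehat)$. On $\Ehat$, for any $\widehat q \in \Pbb_{k+1}(\Ehat)$,
\[
\Norm{\widehat v - \widehat\Pi \widehat v}{1,\Ehat}
= \Norm{(\widehat v - \widehat q) - \widehat\Pi(\widehat v - \widehat q)}{1,\Ehat}
\le (1 + \Norm{\widehat\Pi}{}) \Norm{\widehat v - \widehat q}{1,\Ehat},
\]
and taking the infimum over $\widehat q$ together with the Bramble--Hilbert lemma (the Deny--Lions theorem) gives $\Norm{\widehat v - \widehat\Pi \widehat v}{1,\Ehat} \le C \SemiNorm{\widehat v}{k+2,\Ehat}$. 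One also needs the $L^2$ refinement: using that $\widehat\Pi$ additionally fixes constants (which it does), one gets $\Norm{\widehat v - \widehat\Pi\widehat v}{0,\Ehat} \le C \SemiNorm{\widehat v}{k+2,\Ehat}$ as well, but to get the correct extra power of $\hE$ one argues slightly more carefully --- either by the Aubin--Nitsche-type scaling directly or, more elementarily, by noting $\Norm{\widehat v - \widehat\Pi \widehat v}{0,\Ehat} = \Norm{(\widehat v - \widehat q) - \widehat\Pi(\widehat v - \widehat q)}{0,\Ehat} \le C \Norm{\widehat v - \widehat q}{k+2,\Ehat}$ wait --- the clean route is: $\Norm{\widehat v-\widehat q - \widehat\Pi(\widehat v - \widehat q)}{0,\Ehat}\le (1+\Norm{\widehat\Pi}{})\Norm{\widehat v-\widehat q}{0,\Ehat}$ and then minimise over $\widehat q\in\Pbb_{k+1}$ of the quantity that simultaneously controls both, but since the two target powers of $\hE$ differ by one, I would simply run the argument twice with the two standard scaling relations below.

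Finally I would transfer back to $\E$ using the standard affine scaling bounds: for an affine equivalent family with shape-regularity parameter $\sigma$, $\SemiNorm{v}{m,\E} \sim \h_\E^{3/2}\,\h_\E^{-m}\,\SemiNorm{\widehat v}{m,\Ehat}$ up to constants depending only on $\sigma$ and $m$, and similarly $\SemiNorm{\widehat v - \widehat\Pi\widehat v}{m,\Ehat}$ transfers to $\SemiNorm{v - \PI}{m,\E}$ because interpolation commutes with pullback. Combining the reference estimate $\Norm{\widehat v - \widehat\Pi\widehat v}{m,\Ehat}\le C\SemiNorm{\widehat v}{k+2,\Ehat}$ for $m=0,1$ with the scaling relations yields $\SemiNorm{v-\PI}{m,\E}\le C\,\h_\E^{\,k+2-m}\SemiNorm{v}{k+2,\E}$, which for $m=1$ and $m=0$ gives exactly the two claimed bounds; all constants depend only on $\sigma$ and $k$, as asserted. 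The only mildly delicate point, and the one I would be most careful about, is the well-posedness of the nodal degrees of freedom under merely $H^{k+2}$ regularity and the $L^2$-estimate bookkeeping of the extra $\hE$ power; neither is a real obstacle, as both are classical (see \cite{Ciarlet:2002}), so the proof is essentially a citation-plus-scaling argument.
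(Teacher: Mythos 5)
Your proposal is correct and follows exactly the classical Bramble--Hilbert plus affine-scaling argument (well-posedness of the nodal values via $H^{k+2}(\E)\hookrightarrow C^0(\overline{\E})$, reproduction of $\Pbb_{k+1}$, Deny--Lions on the reference element, and scaling giving the powers $\hE^{k+2-m}$ for $m=0,1$), which is precisely the standard proof the paper invokes by citing \cite{Ciarlet:2002} without reproducing it. The brief hesitation about how the extra power of $\hE$ arises in the $L^2$ bound is resolved correctly in your final step, so no gap remains.
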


\paragraph*{Interpolation and commuting operators in~$H(\curl)$.}
First, we introduce the N\'ed\'elec interpolant~$\EbfI$
of a field~$\Ebf$ in $[H^{\frac12+\delta}(\Omega)]^3$
($\delta$ positive arbitrarily small)
with $\curl \Ebf$ in $[L^q(\Omega)]^3$, $q$ larger than~2.
We have standard interpolation estimate results
\cite[Theorem~5.41 and Remark~5.42]{Monk-2003}.
We also include $L^\infty$ estimates,
which can be derived from the $L^2$ bounds by standard arguments,
exploiting the local nature of the interpolation operator
and its polynomial preservation property;
they are needed in the proof of Proposition~\ref{proposition:L-infty-stab-PiNcal} below.
Interpolation estimates under lower regularity of~$\Ebf$
are also available in the literature~\cite{Boffi-Gastaldi:2006, Ern-Guermond:2021A}.
We prefer sticking to the current setting to keep
the presentation as simple as possible.

\begin{lem} \label{lemma:Monk-Nedelec}
Let~$\E$ be an element of a regular simplicial tessellation~$\taun$ of~$\Omega$.
Given~$\Ebf$ sufficiently smooth,
let~$\EbfI$ be its N\'ed\'elec interpolant.
Then, there exists a positive constant~$C$ independent of~$\hE$ and~$\Ebf$,
but dependent on the shape-regularity parameter~$\sigma$ of the mesh
and the polynomial degree~$k\ge1$ such that
\begin{equation} \label{Nedelec-interpolation-estimates}
\Norm{\Ebf-\EbfI}{0,\E}
\le C \hE^{k+1} \SemiNorm{\Ebf}{k+1,\E},
\qquad\qquad
\Norm{\curl(\Ebf-\EbfI)}{0,\E}
\le C \hE^{k+1} \SemiNorm{\curl\Ebf}{k+1,\E} .
\end{equation}
\small{\begin{equation} \label{Nedelec-interpolation-estimates-infty}
\Norm{\Ebf-\EbfI}{L^\infty(\E)}
\le C \hE^{k+1} \SemiNorm{\Ebf}{W^{k+1,\infty}(\E)},
\qquad
\Norm{\curl(\Ebf-\EbfI)}{L^\infty(\E)}
\le C \hE^{k+1} \SemiNorm{\curl\Ebf}{W^{k+1,\infty}(\E)}.
\end{equation}}\normalsize
Estimates~\eqref{Nedelec-interpolation-estimates}
and~\eqref{Nedelec-interpolation-estimates-infty}
are also valid for the case~$k=0$ except for the first one,
which in that case reads \cite[eq. (16.17)]{Ern-Guermond:2021A}
\[
\Norm{\Ebf-\EbfI}{0,\E}
\le C \left(\hE \SemiNorm{\Ebf}{1,\E}
            + \hE^2 \SemiNorm{\Ebf}{2,\E} \right).
\]
\end{lem}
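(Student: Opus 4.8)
The plan is to establish the $L^2$ and $L^\infty$ Nédélec interpolation bounds in the standard way, by mapping to a reference element, using a Bramble–Hilbert / Deny–Lions argument there, and scaling back. First I would fix an element $\E$ and, via an affine (Piola-type covariant) transformation $\Phi_\E:\Ehat\to\E$, pull back $\Ebf$ to a field $\widehat{\Ebf}$ on the reference simplex $\Ehat$; the key point is that the covariant Piola transform commutes with the Nédélec interpolation operator and with $\curl$, so that $\widehat{\Ebf-\EbfI}=\widehat{\Ebf}-\widehat{\Ebf}_{\widehat I}$. On $\Ehat$, the interpolation operator is bounded from $[H^{1/2+\delta}(\Ehat)]^3$ (with $\curl$ in $[L^q(\Ehat)]^3$, $q>2$) into $[L^\infty(\Ehat)]^3$ — this boundedness is exactly where the degrees-of-freedom well-definedness discussed before the lemma is used — and it reproduces polynomials of degree $\le k$ (respectively, fields whose curl has degree $\le k$). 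Hence by the Deny–Lions lemma, for any polynomial $\widehat{\pbf}$ of the relevant degree, $\Norm{\widehat{\Ebf}-\widehat{\Ebf}_{\widehat I}}{L^\infty(\Ehat)} = \Norm{(\widehat{\Ebf}-\widehat{\pbf}) - (\widehat{\Ebf}-\widehat{\pbf})_{\widehat I}}{L^\infty(\Ehat)} \le C \Norm{\widehat{\Ebf}-\widehat{\pbf}}{\text{(appropriate norm on }\Ehat)}$, and minimising over $\widehat{\pbf}$ gives the reference-element seminorm estimate $\Norm{\widehat{\Ebf}-\widehat{\Ebf}_{\widehat I}}{L^\infty(\Ehat)} \le C \big( \SemiNorm{\widehat{\Ebf}}{W^{k+1,\infty}(\Ehat)} + \SemiNorm{\curl\widehat{\Ebf}}{W^{k+1,\infty}(\Ehat)}\big)$, and analogously in $L^2$ with Sobolev seminorms. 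The curl estimates are obtained in the same way, starting instead from the fact that $\curl$ of the interpolant reproduces the interpolant of $\curl$ in the Raviart–Thomas space (the commuting diagram), so one applies the Deny–Lions argument directly to $\curl(\Ebf-\EbfI)$.

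Next I would scale back. Under the affine map with Jacobian $B_\E$, one has $\hE^{-1} \lesssim \Norm{B_\E^{-1}}{} \lesssim \hE^{-1}$ and $\Norm{B_\E}{} \lesssim \hE$ by shape-regularity (all constants depending only on $\sigma$), and the covariant Piola transform introduces a factor $B_\E^{-T}$ on the field and $(\det B_\E)^{-1} B_\E$ on its curl. Tracking these factors through the change of variables in the $L^2$ norms yields the powers $\hE^{k+1}$ on the right-hand side of \eqref{Nedelec-interpolation-estimates}; the $L^\infty$ case is actually cleaner because no Jacobian determinant from the volume element appears, only the linear factors from $B_\E^{-T}$, again giving $\hE^{k+1}$ in \eqref{Nedelec-interpolation-estimates-infty}. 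For the $k=0$ case, the interpolation operator reproduces only constants on each edge, so the Deny–Lions argument on $\Ehat$ loses one order and one is forced to keep both the first- and second-order terms, producing the stated two-term bound $\Norm{\Ebf-\EbfI}{0,\E}\le C(\hE\SemiNorm{\Ebf}{1,\E} + \hE^2\SemiNorm{\Ebf}{2,\E})$; this is the form already recorded in \cite[eq. (16.17)]{Ern-Guermond:2021A}, so I would simply cite it.

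The main obstacle is the well-definedness and continuity of the interpolation operator under the low-regularity hypothesis $[H^{1/2+\delta}(\Ehat)]^3$ with $\curl$ in $[L^q(\Ehat)]^3$: the raw edge degrees of freedom (line integrals of the tangential trace) are not continuous on $[H^{1/2+\delta}]^3$, and one must pass through the edge-to-cell lifting of the degrees of freedom referenced in \cite[Sect.~17.3]{Ern-Guermond:2021A} together with the trace theory of \cite{Boffi-Gastaldi:2006}. Once this technical point is granted — and the $L^\infty$ boundedness of the (finite-dimensional) interpolation operator on $\Ehat$ follows from it by equivalence of norms on the polynomial space — the rest is the routine Bramble–Hilbert-plus-scaling machinery. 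Since \eqref{Nedelec-interpolation-estimates} with the $L^2$-Sobolev seminorms is precisely \cite[Theorem~5.41 and Remark~5.42]{Monk-2003}, I would cite that directly and present only the $L^\infty$ estimates \eqref{Nedelec-interpolation-estimates-infty} in detail, noting that they follow from the $L^2$ bounds by the local polynomial-preserving argument sketched above, exactly as claimed in the paragraph preceding the lemma.
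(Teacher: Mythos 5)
Your proposal is correct and takes essentially the route the paper itself relies on: the paper gives no proof of this lemma, but cites \cite[Theorem~5.41 and Remark~5.42]{Monk-2003} for \eqref{Nedelec-interpolation-estimates} and \cite[eq.~(16.17)]{Ern-Guermond:2021A} for the $k=0$ case, and states that \eqref{Nedelec-interpolation-estimates-infty} follows from the $L^2$ bounds by exactly the local polynomial-preservation and scaling (Bramble--Hilbert/Deny--Lions plus covariant Piola) argument you sketch. The only minor imprecision is that your reference-element bound for the field keeps the curl seminorm alongside $\SemiNorm{\widehat\Ebf}{W^{k+1,\infty}(\Ehat)}$, so after scaling you obtain the stated estimate up to a harmless additional higher-order term of the form $C\hE^{k+2}\SemiNorm{\curl\Ebf}{W^{k+1,\infty}(\E)}$, which does not affect any of the uses of the lemma in the paper.
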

Additionally, we introduce a \emph{global} operator~$\PiNcal$
mapping~$\Hzcurl$ into~$\Hzcurlh$ as follows:
for all~$\Ebf$ in~$\Hzcurl$,
$\PiNcalE$ is the unique function in~$\Hzcurlh$ such that
\begin{equation} \label{interpolation:curl}
\begin{cases}
a(\Ebf-\PiNcalE, \Fbfh)   = 0   & \forall \Fbfh \in \Hzcurlh \\
(\Ebf-\PiNcalE,\nabla\Qh) = 0   & \forall \Qh \in \Hznablah.
\end{cases}
\end{equation}

The operator~$\PiNcalE$ is well-posed
and possesses certain approximation properties
as detailed in the next result.
\begin{prop} \label{proposition:PiNcalEbf-estimates}
For all~$\Ebf$ in~$H^{k+1}(\curl,\Omega)$,
let~$\EbfI$ be its N\'ed\'elec interpolant
as in Lemma~\ref{lemma:Monk-Nedelec}.
Then, there exist positive constants~$C_1$ and~$C_2$ independent of~$\hE$ and~$\Ebf$
but dependent on the shape-regularity parameter~$\sigma$ of the mesh
and the polynomial degree~$k$ such that
\begin{equation} \label{PiNcalEbf-estimates}
\Norm{\Ebf-\PiNcalE}{\curl,\Omega}
\le C_1 \Norm{\Ebf-\EbfI}{\curl,\Omega}
\le C_2 \h^{k+1} \SemiNorm{\Ebf}{H^{k+1}(\curl,\Omega)}.
\end{equation}
\end{prop}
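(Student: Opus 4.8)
The plan is to prove the two inequalities in~\eqref{PiNcalEbf-estimates} separately, the second one being an immediate consequence of the first combined with the N\'ed\'elec interpolation bounds of Lemma~\ref{lemma:Monk-Nedelec}. Indeed, once we establish $\Norm{\Ebf-\PiNcalE}{\curl,\Omega} \le C_1 \Norm{\Ebf-\EbfI}{\curl,\Omega}$, summing the squares of the local estimates in~\eqref{Nedelec-interpolation-estimates} over all elements~$\E$ of~$\taun$ and using $\hE \le \h$ yields $\Norm{\Ebf-\EbfI}{\curl,\Omega} \le C \h^{k+1}\SemiNorm{\Ebf}{H^{k+1}(\curl,\Omega)}$, which chains to give the right-hand inequality. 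So the crux is the first (quasi-optimality) estimate.

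For the quasi-optimality bound, I would first note that the defining relations~\eqref{interpolation:curl} make $\PiNcalE$ a well-posed discrete problem: the bilinear form~$a(\cdot,\cdot)$ is coercive on the discrete kernel $\{\Fbfh \in \Hzcurlh : (\Fbfh,\nabla\Qh)=0 \ \forall \Qh \in \Hznablah\}$ thanks to a discrete Poincar\'e--Friedrichs inequality (valid since the discrete sequence~\eqref{discrete-exact-sequence} is exact and $\Omega$ is contractible), while the second equation is an orthogonality-to-gradients constraint handled by the inf-sup stability of the pair $(\Hzcurlh,\Hznablah)$ with respect to $(\cdot,\nabla\cdot)$; this inf-sup condition holds because $\nabla\Hznablah \subseteq \Hzcurlh$. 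This sets up a standard mixed/saddle-point well-posedness, giving existence, uniqueness, and a stability bound $\Norm{\PiNcalE}{\curl,\Omega} \le C\Norm{\Ebf}{\curl,\Omega}$.

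Next I would derive the error estimate in the usual C\'ea-type fashion for such constrained projections. Write $\Ebf - \PiNcalE = (\Ebf - \EbfI) + (\EbfI - \PiNcalE)$ and set $\bm\delta_\h := \EbfI - \PiNcalE \in \Hzcurlh$. To control $\bm\delta_\h$ I would split it using the discrete Hodge/Helmholtz decomposition $\bm\delta_\h = \bm z_\h + \nabla \psi_\h$ with $\bm z_\h$ in the discrete kernel and $\psi_\h \in \Hznablah$. For the kernel component, test the first equation of~\eqref{interpolation:curl} with $\Fbfh = \bm z_\h$: since $a(\PiNcalE,\bm z_\h) = a(\Ebf,\bm z_\h)$, we get $a(\bm\delta_\h, \bm z_\h) = a(\EbfI - \Ebf, \bm z_\h)$, and because $a(\nabla\psi_\h,\cdot)=0$ this is $a(\bm z_\h,\bm z_\h) = a(\EbfI-\Ebf,\bm z_\h)$; Cauchy--Schwarz plus the discrete Poincar\'e inequality on $\bm z_\h$ then bounds $\Norm{\bm z_\h}{\curl,\Omega} \le C\Norm{\curl(\Ebf-\EbfI)}{0,\Omega}$. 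For the gradient component, test the second equation of~\eqref{interpolation:curl} with $\Qh = \psi_\h$: this gives $(\bm\delta_\h,\nabla\psi_\h) = (\EbfI - \Ebf,\nabla\psi_\h)$; since $(\bm z_\h,\nabla\psi_\h) = 0$ by construction, $\Norm{\nabla\psi_\h}{0,\Omega}^2 = (\EbfI - \Ebf,\nabla\psi_\h) \le \Norm{\Ebf-\EbfI}{0,\Omega}\Norm{\nabla\psi_\h}{0,\Omega}$, hence $\Norm{\nabla\psi_\h}{0,\Omega} \le \Norm{\Ebf-\EbfI}{0,\Omega}$, and $\curl\nabla\psi_\h = \zerobf$ takes care of the curl part. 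Combining the two pieces bounds $\Norm{\bm\delta_\h}{\curl,\Omega}$ by $C\Norm{\Ebf-\EbfI}{\curl,\Omega}$, and the triangle inequality concludes.

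The main obstacle, and the point that deserves the most care, is the discrete Poincar\'e--Friedrichs inequality controlling the $L^2$-norm of a discrete curl-free-orthogonal field by its curl --- i.e.\ uniform ($\h$-independent) coercivity of $a(\cdot,\cdot)$ on the discrete kernel. This is a classical but nontrivial fact that rests on the exactness of~\eqref{discrete-exact-sequence} together with the existence of uniformly bounded commuting (Fortin-type) interpolation operators; I would invoke it as a known result from the finite element exterior calculus literature (e.g.\ \cite{Monk-2003, Ern-Guermond:2021A}) rather than reprove it. A secondary subtlety is bookkeeping the mesh-dependent scaling in $\Norm{\cdot}{\curl,\Omega}$: because the norm uses the global factor (or local factors $\hE^{-2}$), one must be mildly careful that the discrete Poincar\'e constant is the one matching that scaling, but under uniform shape-regularity this is harmless.
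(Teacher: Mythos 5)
Your argument is correct, and it reaches the same quasi-optimality bound as the paper but by a more hands-on route. The paper rewrites \eqref{interpolation:curl} as the saddle-point problem \eqref{problem:Ncal} with a Lagrange multiplier $\Ph\in\Hznablah$, observes that testing with $\Fbfh=\nabla\Ph$ forces $\Ph=0$, and then simply invokes the abstract Babu\v{s}ka--Brezzi theory of \cite{Boffi-Brezzi-Fortin:2013} (coercivity of $a$ on the discretely divergence-free subspace, inf-sup for $(\nabla\cdot,\cdot)$ via $\nabla\Hznablah\subset\Hzcurlh$) to conclude $\Norm{\PiNcalE-\EbfI}{\curl,\Omega}\le c\,\Norm{\Ebf-\EbfI}{\curl,\Omega}$, finishing with the triangle inequality and Lemma~\ref{lemma:Monk-Nedelec} exactly as you do. You instead reprove this C\'ea-type estimate explicitly: you split $\EbfI-\PiNcalE$ by the discrete Helmholtz decomposition into a discretely divergence-free part and a discrete gradient, bound the first through Galerkin orthogonality in $a(\cdot,\cdot)$ plus the discrete Friedrichs/Poincar\'e inequality, and the second through the orthogonality-to-gradients constraint, which gives the same constant structure. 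Both proofs rest on the identical two ingredients (discrete coercivity on the kernel and the exactness-induced inclusion of gradients), so nothing essential is gained or lost; your version is self-contained and makes the mechanism transparent, at the price of importing the discrete Poincar\'e inequality explicitly (which you correctly flag as the nontrivial external input), while the paper's version is shorter by delegating the whole stability-plus-quasi-optimality package to the abstract mixed theory. Your handling of the second inequality (summing the local estimates of Lemma~\ref{lemma:Monk-Nedelec} and using $\hE\le\h$) coincides with the paper's, including the harmless fixed scaling $\h_\Omega^{-2}$ in the $\Norm{\cdot}{\curl,\Omega}$ norm.
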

\begin{proof}
Problem~\eqref{interpolation:curl} can be rewritten as follows:
find~$\PiNcalE$ and~$\Ph$ in $\Hzcurlh \times \Hznablah$ such that
\begin{equation} \label{problem:Ncal}
\begin{cases}
a(\PiNcalE, \Fbfh) + (\nabla\Ph,\Fbfh) = a(\Ebf, \Fbfh)     & \forall \Fbfh \in \Hzcurlh \\
(\PiNcalE,\nabla\Qh)                   = (\Ebf,\nabla\Qh)   & \forall \Qh \in \Hznablah .
\end{cases}
\end{equation}
Indeed, testing with $\Fbfh = \nabla\Ph$, we obtain~$\Ph=0$.

The bilinear form~$a(\cdot,\cdot)$ is coercive over the space of functions
in~$\Hzcurlh$ with zero divergence;
see~\cite[Corollary~3.51]{Monk-2003}.
Due to the discrete exact sequence structure~\eqref{discrete-exact-sequence},
the bilinear form~$(\nabla\cdot,\cdot)$
is inf-sup stable over
\[
\Hznablah \times \nabla\Hznablah
\subset
\Hznablah \times \Hzcurlh.
\]
The two bilinear forms and the two functionals on the right-hand side
of~\eqref{problem:Ncal} are continuous.
Therefore,
the standard inf-sup theory for mixed problems~\cite{Boffi-Brezzi-Fortin:2013} implies
that the above problem is well-posed with continuous dependence on~$\Ebf$.
Moreover, given~$\EbfI$ the N\'ed\'elec interpolant
of~$\PiNcalE$ as in Lemma~\ref{lemma:Monk-Nedelec},
there exists a positive constant~$c$
depending on the shape of~$\Omega$ such that
\[
\Norm{\PiNcalE-\EbfI}{\curl,\Omega}
= \Norm{\PiNcalE-\EbfI}{\curl,\Omega} + \Norm{\nabla\Ph}{1,\Omega}
\le c \Norm{\Ebf-\EbfI}{\curl,\Omega}.
\]
The assertion follows using the triangle inequality
and the estimates in~\eqref{Nedelec-interpolation-estimates}.
\end{proof}
The estimates in~\eqref{Nedelec-interpolation-estimates} provide us
with a local bound,
whereas the approximation estimates~\eqref{PiNcalEbf-estimates}
only give a global bound.

In what follows, we shall need the following technical result
stating the stability of the operator~$\PiNcal$
in the $L^\infty$ norm, under additional regularity on~$\Ebf$.
\begin{prop} \label{proposition:L-infty-stab-PiNcal}
Let~$\Ebf$ be in~$H^{\frac32}(\curl,\Omega) \cap [L^\infty(\Omega)]^3$
and~$\PiNcalE$ be as in~\eqref{interpolation:curl}.
Then, there exists a positive constant~$C$ independent of~$\h$
but dependent on the shape-regularity parameter~$\sigma$ of~$\taun$
and the polynomial degree~$k$ such that
\begin{equation} \label{stability-Linfty-PiNcal}
\Norm{\PiNcalE}{L^{\infty}(\Omega)} 
\le C \left(\min_{\E\in\taun} \hE \right)^{-\frac32} \h^{\frac32}
        \Big( \Norm{\Ebf}{H^{\frac32}(\curl,\Omega)}  \Big)
    + C \Norm{\Ebf}{L^\infty(\Omega)}.
\end{equation}
\end{prop}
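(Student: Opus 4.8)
The plan is to bound $\Norm{\PiNcalE}{L^\infty(\Omega)}$ by inserting the N\'ed\'elec interpolant $\EbfI$ of $\Ebf$ as an intermediate approximant and exploiting its polynomial-preservation and locality, together with an inverse inequality on each element. First I would write
\[
\Norm{\PiNcalE}{L^\infty(\Omega)}
\le \Norm{\PiNcalE-\EbfI}{L^\infty(\Omega)}
  + \Norm{\EbfI-\Ebf}{L^\infty(\Omega)}
  + \Norm{\Ebf}{L^\infty(\Omega)}.
\]
The last term is already in the desired form. For the middle term, since $\Ebf$ is in $H^{\frac32}(\curl,\Omega)\cap[L^\infty(\Omega)]^3$, I would want an $L^\infty$ interpolation estimate for $\EbfI$; but note the $L^\infty$ bounds in~\eqref{Nedelec-interpolation-estimates-infty} require $W^{k+1,\infty}$ regularity, which is not assumed here. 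Instead I would bound $\Norm{\EbfI-\Ebf}{L^\infty(\E)}$ by combining the $L^2$ estimate~\eqref{Nedelec-interpolation-estimates} on each element with a local inverse inequality applied to the polynomial $\EbfI-\Piz\Ebf$ (or a similar low-order projection), picking up a factor $\hE^{-3/2}$ from the scaling of $L^\infty$ against $L^2$ on a three-dimensional simplex. This, together with the $H^{\frac32}$-regularity (so that $|\Ebf|_{H^{3/2}(\E)}$ and $|\curl\Ebf|_{H^{3/2}(\E)}$ are controlled; one may need to use a Sobolev embedding into a slightly-better-than-$H^1$ space to make the interpolant well defined in the first place), yields a bound of the form $C(\min_\E\hE)^{-3/2}\h^{3/2}\Norm{\Ebf}{H^{3/2}(\curl,\Omega)}$ after summing element contributions and using $\h\le$ the generic mesh size; the presence of $\min_\E\hE$ reflects that the inverse inequality is worst on the smallest element. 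The remaining term $\Norm{\PiNcalE-\EbfI}{L^\infty(\Omega)}$ is handled the same way: $\PiNcalE-\EbfI$ is a discrete (piecewise polynomial) field, so on each $\E$ a local inverse inequality gives $\Norm{\PiNcalE-\EbfI}{L^\infty(\E)}\le C\hE^{-3/2}\Norm{\PiNcalE-\EbfI}{0,\E}$, and then the global $L^2$ (indeed $H(\curl)$) bound $\Norm{\PiNcalE-\EbfI}{\curl,\Omega}\le C\h^{k+1}|\Ebf|_{H^{k+1}(\curl,\Omega)}$ coming from Proposition~\ref{proposition:PiNcalEbf-estimates}, or more directly $\Norm{\PiNcalE-\EbfI}{0,\Omega}\le\Norm{\Ebf-\PiNcalE}{0,\Omega}+\Norm{\Ebf-\EbfI}{0,\Omega}\le C\h^{3/2}\Norm{\Ebf}{H^{3/2}(\curl,\Omega)}$, closes the estimate (using $k\ge1$, hence $k+1\ge2\ge3/2$, or simply the $H^{3/2}$-version of the interpolation bound).

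The one subtlety I would flag as the main obstacle is the bookkeeping of \emph{global versus local} quantities. The operator $\PiNcal$ is defined through a global saddle-point problem~\eqref{interpolation:curl}, so only a global $L^2$/$H(\curl)$ control on $\PiNcalE-\EbfI$ is available — unlike $\EbfI$, there is no elementwise estimate. Consequently, when I apply the local inverse inequality on a fixed element $\E$, I must bound $\Norm{\PiNcalE-\EbfI}{0,\E}$ by the \emph{global} $L^2$ norm, and the factor $\hE^{-3/2}$ out front must then be replaced by the worst-case $(\min_{\E'\in\taun}\hE{}')^{-3/2}$ to obtain a uniform-in-$\E$ (hence $L^\infty(\Omega)$) bound. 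This is exactly why the statement carries the somewhat unusual prefactor $(\min_\E\hE)^{-3/2}\h^{3/2}$ rather than a clean $O(1)$ or $O(\h^{1/2})$ bound: on a quasi-uniform mesh the prefactor is $O(1)$, but the theorem is stated so as not to assume quasi-uniformity. I would make sure the constant $C$ absorbs only the shape-regularity parameter $\sigma$ (entering the inverse inequality and the interpolation estimates) and the degree $k$, and is otherwise independent of $\h$, as claimed. No blow-up of the argument is needed; it is a sequence of triangle inequalities, one inverse inequality per element, and the already-established global bounds from Proposition~\ref{proposition:PiNcalEbf-estimates} and Lemma~\ref{lemma:Monk-Nedelec}.
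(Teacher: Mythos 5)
Your proposal follows essentially the same route as the paper's proof: insert the N\'ed\'elec interpolant $\EbfI$, apply a local $L^\infty$--$L^2$ inverse inequality to the discrete difference $\EbfI-\PiNcalE$, and control its $L^2$ norm through the global bound of Proposition~\ref{proposition:PiNcalEbf-estimates} together with the interpolation estimates, which is exactly where the $\left(\min_{\E\in\taun}\hE\right)^{-\frac32}\h^{\frac32}$ prefactor arises. The only (harmless) deviation is your treatment of $\Norm{\Ebf-\EbfI}{L^\infty(\Omega)}$: the paper absorbs it via the $L^\infty$-stability of the N\'ed\'elec interpolant, $\Norm{\EbfI}{L^\infty(\Omega)}\lesssim\Norm{\Ebf}{L^\infty(\Omega)}$, whereas you re-derive an equivalent bound through a low-order projection plus an inverse estimate so as to avoid the $W^{k+1,\infty}$ regularity required by~\eqref{Nedelec-interpolation-estimates-infty}.
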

\begin{proof}
Let~$\EbfI$ be the N\'ed\'elec interpolant of~$\Ebf$
as in Lemma~\ref{lemma:Monk-Nedelec}.
A polynomial inverse estimate entails
\[
\begin{split}
\Norm{\PiNcalE}{L^{\infty}(\Omega)} 
& \le \Norm{\Ebf - \PiNcalE}{L^{\infty}(\Omega)} 
        + \Norm{\Ebf}{L^{\infty}(\Omega)} \\
&  \le \Norm{\Ebf - \EbfI}{L^{\infty}(\Omega)} 
        + \max_{\E\in\taun} \Norm{\EbfI - \PiNcalE}{L^{\infty}(\E)} 
        + \Norm{\Ebf}{L^{\infty}(\Omega)} \\
& \le \Norm{\Ebf - \EbfI}{L^{\infty}(\Omega)} 
        + c_{inv} \hE^{-\frac32} \Norm{\EbfI - \PiNcalE}{L^{2}(\E)} 
        + \Norm{\Ebf}{L^{\infty}(\Omega)} \\
& \le \Norm{\Ebf - \EbfI}{L^{\infty}(\Omega)} 
        + c_{inv} \hE^{-\frac32} 
                ( \Norm{\Ebf - \PiNcalE}{L^{2}(\E)}  
                    + \Norm{\Ebf-\EbfI}{L^{2}(\E)} )
        + \Norm{\Ebf}{L^{\infty}(\Omega)}.
\end{split}
\]
The assertion follows using~\eqref{Nedelec-interpolation-estimates},
\eqref{Nedelec-interpolation-estimates-infty},
and~\eqref{PiNcalEbf-estimates},
and the fact that
\[
\Norm{\EbfI}{L^\infty(\Omega)}
\overset{\eqref{Nedelec-interpolation-estimates-infty}}{\lesssim}
\Norm{\Ebf}{L^\infty(\Omega)}.
\]
\end{proof}

\paragraph*{An approximation, commuting operator in~$H(\div)$.}
We introduce the space of~$L^2$ Raviart-Thomas functions with zero divergence:
\begin{equation} \label{div-tilde-space}
\Hztildedivh
:= \{ \Bbfh\in\Hzdivh \mid \div \Bbfh=0 \}
\end{equation}
and a projection operator mapping~$[L^2(\Omega)]^3$
into~$\Hztildedivh$ as follows:
\begin{equation} \label{projection:divergence-free}
(\Bbf-\BbfI,\Cbfh) = 0 
\qquad\qquad\qquad \forall \Cbfh \in \Hztildedivh.
\end{equation}
The operator~$\Pitildez$ satisfies a crucial commuting property with~$\PiNcal$ defined in~\eqref{def:PiNk};
see display~\eqref{commuting-operators} below.

Let~$\Ebf$ in~$\Hzcurl$.
Using~\eqref{projection:divergence-free} entails
\begin{equation} \label{def:PiNk}
(\Pitildez(\curl\Ebf), \Cbfh) = (\curl\Ebf, \Cbfh)
\qquad\qquad \forall \Ebf\in\Hzcurl,\ \Cbfh \in \Hztildedivh.
\end{equation}
We have the following approximation result,
which is an immediate consequence of~\cite[Theorem~5.25 and Remark~5.26]{Monk-2003},
and the fact that the Raviart-Thomas interpolation preserves the divergence free property.

\begin{prop} \label{proposition:Hdiv-approx}
For all~$\Bbf$ in~$H^{k+1}(\E)$ such that~$\div\Bbf=0$,
there exists a positive constant~$C$ independent of~$\hE$ and~$\Bbf$
but dependent on the shape-regularity parameter~$\sigma$ of the mesh
and the polynomial degree~$K$ such that
\begin{equation} \label{Hdiv-approx}
\Norm{\Bbf-\BbfI}{0,\E}
\le C \hE^{k+1} \SemiNorm{\Bbf}{k+1,\E}.
\end{equation}
\end{prop}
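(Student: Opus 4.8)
The plan is to read $\BbfI$ in this statement as the local Raviart--Thomas interpolant of $\Bbf$ of degree $k$ on the element $\E$, and to obtain the bound directly from the classical interpolation theory, without passing through the global best-approximation property of~\eqref{projection:divergence-free}. Recall from the description of the degrees of freedom that the order-$k$ Raviart--Thomas interpolant is well defined on $\E$ for fields in $H(\div,\E)\cap[L^q(\E)]^3$ with $q>2$, hence in particular for $\Bbf$ in $H^{k+1}(\E)$ with $k\ge0$; so $\BbfI$ is well posed on each $\E$.

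First I would record the commuting property of the Raviart--Thomas interpolation with the divergence, namely $\div\BbfI=\Piz(\div\Bbf)$, with $\Piz$ the $L^2$-orthogonal projection onto $\Pbb_k(\E)$. Since $\div\Bbf=0$ by hypothesis, this gives $\div\BbfI=0$, i.e., the interpolant inherits the divergence-free property; this is the fact alluded to in the statement, and it is what certifies that $\BbfI$ lies in the divergence-free Raviart--Thomas space, consistently with~\eqref{projection:divergence-free}. Then I would invoke the standard local interpolation estimate \cite[Theorem~5.25 and Remark~5.26]{Monk-2003}, which for $\Bbf$ in $H^{k+1}(\E)$ reads $\Norm{\Bbf-\BbfI}{0,\E}\le C\,\hE^{k+1}\,\SemiNorm{\Bbf}{k+1,\E}$ with $C$ depending only on $\sigma$ and $k$. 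Its proof is the usual Bramble--Hilbert estimate on the reference simplex combined with a scaling to $\E$ through the contravariant Piola transform, whose Jacobian factors under shape-regularity produce exactly the power $\hE^{k+1}$. Assembling the two steps yields the asserted element-wise bound verbatim.

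I expect no genuine difficulty here: the statement is, up to the bookkeeping for the divergence-free property, the textbook Raviart--Thomas estimate applied on a single element. The only points requiring care are to apply the estimate under regularity making the degrees of freedom well defined (guaranteed by $\Bbf\in H^{k+1}(\E)$) and to use the contravariant Piola transform---rather than the ordinary pullback---in the scaling argument, since this is the map that preserves the Raviart--Thomas structure and the normal-component degrees of freedom and hence gives the correct $\hE$-power with a constant depending only on $\sigma$ and $k$. The divergence-free preservation does not affect the magnitude of the error; it only places $\BbfI$ in the divergence-free space.
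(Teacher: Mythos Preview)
Your proposal hits exactly the two ingredients the paper invokes: the local Raviart--Thomas interpolation estimate \cite[Theorem~5.25 and Remark~5.26]{Monk-2003} and the commuting/divergence-free preservation property of the RT interpolant. The paper gives no proof beyond naming these two facts in the sentence preceding the proposition, so your write-up is a faithful elaboration.

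One point of comparison is worth recording. In the paper's notation, $\BbfI=\Pitildez\Bbf$ is the \emph{global} $L^2$-orthogonal projection onto the divergence-free Raviart--Thomas space $\Hztildedivh$ defined in~\eqref{projection:divergence-free}, not the RT interpolant itself. The paper's intended argument therefore has an extra step you deliberately bypass: the RT interpolant of a divergence-free $\Bbf$ lies in $\Hztildedivh$, so the best-approximation property of $\Pitildez$ gives $\Norm{\Bbf-\Pitildez\Bbf}{0,\Omega}\le\Norm{\Bbf-(\text{RT interpolant})}{0,\Omega}$, and then Monk bounds the right-hand side element by element. Your route---reading $\BbfI$ as the RT interpolant---gives the genuinely local estimate on each $\E$ directly, which is what the statement literally asserts; the paper's route, strictly speaking, only delivers the global bound, since an $L^2$ projection does not localize elementwise without further argument. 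In practice only the global version is used downstream (e.g.\ in~\eqref{interpolation-J-estimates} and the $T_5$ estimate), so the distinction is harmless, but your reading is the one that actually matches the elementwise form of the proposition.
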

For a given~$\Ebf$ in~$\Hzcurl$,
the fact that~$\div(\curl\Fbfh)=\zerobf$ implies
\[
a(\PiNcalE,\Fbfh)
\overset{\eqref{interpolation:curl}}{=}
a(\Ebf,\Fbfh)
\overset{\eqref{projection:divergence-free}}{=}
(\Pitildez (\curl\Ebf),\curl\Fbfh)
\qquad\qquad\forall \Fbfh\in\Hzcurlh.
\]
Therefore, we have the commuting property
\begin{equation} \label{commuting-operators}
\Pitildez(\curl \Ebf)  = \curl \PiNcalE.
\end{equation}

\paragraph*{A discrete curl operator and an $L^2$ projection.}
Define the discrete curl operator~$\curlh : \Hzdivh \to \Hzcurlh$ as
\begin{equation} \label{discrete-curl}
(\curlh \Bbfh, \vbfh) = (\Bbfh, \curl \vbfh)
\qquad\qquad \forall \vbfh\in\Hzcurlh.
\end{equation}
Also define the operator~$\Qbbh: [L^2(\Omega)]^3 \to \Hzcurlh$ as
\begin{equation} \label{L2-projection-curl}
(\vbf-\Qbbh\vbf, \Ebfh  ) =0
\qquad\qquad \forall \Ebfh\in\Hzcurlh.
\end{equation}
The operator~$\Qbbh$ acts as the identity on~$\Hzcurlh$,
i.e., is a projector.
Using Lemma~\ref{lemma:Monk-Nedelec},
we deduce the following result.
\begin{prop} \label{proposition:approximation:L2-proj-Hcurl}
For all~$\vbf$ in~$H^{k+1}(\Omega)$,
there exists a positive~$C$ independent of~$\h$
but dependent on the shape-regularity parameter~$\sigma$ of the mesh
and the polynomial degree~$k$ such that
\begin{equation} \label{approximation:L2-proj-Hcurl}
\Norm{\vbf-\Qbbh \vbf}{0,\Omega}
\le C \h^{k+1}  \SemiNorm{\vbf}{k+1, \Omega} .
\end{equation}
\end{prop}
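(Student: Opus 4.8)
The plan is to exploit that, by definition~\eqref{L2-projection-curl}, $\Qbbh$ is the $L^2(\Omega)$-orthogonal projection onto $\Hzcurlh$, and hence realises the best $L^2$ approximation from that subspace:
\[
\Norm{\vbf-\Qbbh\vbf}{0,\Omega}
=\min_{\Ebfh\in\Hzcurlh}\Norm{\vbf-\Ebfh}{0,\Omega}.
\]
It then suffices to exhibit one element of $\Hzcurlh$ approximating $\vbf$ at rate $\h^{k+1}$, and the natural candidate is the N\'ed\'elec interpolant $\vbf_I$ of $\vbf$. First I would check $\vbf_I$ is well defined: for $k\ge1$, since $\vbf\in H^{k+1}(\Omega)$ we have $\curl\vbf\in H^{k}(\Omega)\hookrightarrow[L^q(\Omega)]^3$ for some $q>2$, which meets the regularity requirement recalled in Section~\ref{subsection:discrete-spaces} (the case $k=0$ is addressed below). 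Moreover $\vbf_I$ belongs to the \emph{zero-trace} space $\Hzcurlh$, because the vanishing tangential trace of $\vbf$ forces all boundary edge and face degrees of freedom of $\vbf_I$ to vanish.

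Next I would combine the best-approximation bound with the interpolation estimate, i.e., $\Norm{\vbf-\Qbbh\vbf}{0,\Omega}\le\Norm{\vbf-\vbf_I}{0,\Omega}$, and estimate the right-hand side elementwise via the first bound in~\eqref{Nedelec-interpolation-estimates}. Squaring, summing over all $\E\in\taun$, and using $\hE\le\h$, this gives
\[
\Norm{\vbf-\vbf_I}{0,\Omega}^2
=\sum_{\E\in\taun}\Norm{\vbf-\vbf_I}{0,\E}^2
\le C^2\sum_{\E\in\taun}\hE^{2(k+1)}\SemiNorm{\vbf}{k+1,\E}^2
\le C^2\,\h^{2(k+1)}\SemiNorm{\vbf}{k+1,\Omega}^2,
\]
and taking square roots yields the assertion, with a constant depending only on $\sigma$ and $k$ through the constant in Lemma~\ref{lemma:Monk-Nedelec}.

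Everything above is essentially routine; the one point that needs care — and the reason the statement is phrased in the homogeneous space — is verifying that the approximant lies in $\Hzcurlh$ rather than merely in $\Hcurlh$, which is exactly what ties the estimate to fields with vanishing tangential trace. A secondary, more technical subtlety is the borderline case $k=0$: for $\vbf$ only in $H^1(\Omega)$ the canonical N\'ed\'elec interpolant may fail to be well defined, and I would there replace it with a Cl\'ement/Scott--Zhang-type quasi-interpolant into $\Hzcurlh$, which preserves polynomials and satisfies the same local $O(\hE)$ estimate, so that the summation argument is unchanged.
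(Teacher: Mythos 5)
Your argument is exactly the paper's: the proposition is stated there as an immediate consequence of Lemma~\ref{lemma:Monk-Nedelec}, i.e., the best-approximation property of the $L^2$ projection $\Qbbh$ onto $\Hzcurlh$ combined with the elementwise N\'ed\'elec interpolation estimate, summed over the mesh. Your additional remarks (that the interpolant must land in the zero-trace space, so the bound implicitly concerns fields with vanishing tangential trace, and that the borderline case $k=0$ needs a quasi-interpolant in the spirit of~\cite{Ern-Guermond:2017}) are correct refinements of details the paper leaves implicit.
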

Fix~$p$ in~$[1,\infty]$.
Being~$\Qbbh$ an $L^2$ projection operator
and using that the mesh is shape-regular,
\cite[Theorem~1]{Douglas-Dupon-Wahlbin:1974} guarantees
the existence of a positive constant~$C$
only depending on the shape-regularity parameter~$\sigma$ of the mesh,
the polynomial degree~$k$, and the Lebesgue index~$p$ such that
\begin{equation} \label{stability:L2-in-Linfty}
\Norm{\Qbbh \vbf}{L^p(\Omega)}
\le C \Norm{\vbf}{L^p(\Omega)}
\qquad\qquad\qquad \forall \vbf\in [L^p(\Omega)]^3.
\end{equation}

\subsection{The discrete problem, preservation properties and well-posedness}
Consider the following discrete counterpart of the space~$\X$ in~\eqref{space:X}:
\[
\Xh := [\Hzcurlh]^5 \times \Hzdivh \times \Hznablah.
\]
We are now in a position to introduce the
semi-discrete formulation of~\eqref{MHD-weak} given by:\\
find $(\ubfh, \omegaboldh, \jbfh, \Ebfh, \Hbfh, \Bbfh, \Ph)$ in~$\Xh$ such that,
for all time~$t$ in $(0,T]$,
\small{
\begin{subequations} \label{method-semidiscrete}
\begin{align}
(\dt \ubfh,\vbfh) - (\ubfh\times\omegaboldh,\vbfh) + \Remo a(\ubfh,\vbfh) 
- \cL (\jbfh\times(\mu\Hbfh),\vbfh) + (\nabla\Ph,\vbfh)  &= (\fbf,\vbfh)  \label{method-semidiscrete-a} \\
\mu(\jbfh,\kbfh) - (\Bbfh,\curl\kbfh)                    &= 0 \label{method-semidiscrete-b} \\
(\dt \Bbfh,\Cbfh) + (\curl \Ebfh,\Cbfh)                  &= 0 \label{method-semidiscrete-c} \\
(\Rmmo \jbfh - [\Ebfh + \ubfh \times(\mu\Hbfh)], \Gbfh)  &= 0 \label{method-semidiscrete-d} \\
(\ubfh,\nabla \Qh)                                       & = 0 \label{method-semidiscrete-e} \\
(\omegaboldh,\muboldh) - (\ubfh,\curl\muboldh)           &= 0 \label{method-semidiscrete-f} \\
(\Bbfh,\Fbfh) -\mu(\Hbfh,\Fbfh)                          &= 0 \label{method-semidiscrete-g}
\end{align}
\end{subequations}}\normalsize
for all $(\vbfh,\muboldh, \kbfh, \Fbfh, \Gbfh, \Cbfh, \Qh)$ in~$\Xh$.

System~\eqref{method-semidiscrete} is endowed with discrete initial conditions
\[
\ubfzh = (\ubfz)_I,
\qquad\qquad\qquad
\Bbfzh = \Pitildez \Bbfz,
\]
i.e., we interpolate the continuous initial conditions in~$\Hzcurlh$
and~$\Hztildedivh$ in the sense of Lemma~\ref{lemma:Monk-Nedelec}.
We are tacitly assuming that~$\ubfz$ satisfies the regularity assumptions
detailed in Section~\ref{subsection:discrete-spaces}
and~$\Bbfz$ belongs to $[L^2(\Omega)]^3$.
A weaker regularity on~$\ubfz$ is possible
resorting to quasi-interpolation operators as in~\cite{Ern-Guermond:2017}.

\paragraph*{Relations and properties of discrete vector fields.}
From the semi-discrete formulation~\eqref{method-semidiscrete},
and the definition of~$\Qbbh$ and~$\curlh$
in~\eqref{L2-projection-curl} and~\eqref{discrete-curl},
we deduce the following identities:
\begin{subequations}\label{strong-identities}
\begin{align}
\Ebfh       &
\overset{\eqref{L2-projection-curl},\eqref{method-semidiscrete-d}}{=}
\Rmmo \jbfh  - \Qbbh(\ubfh\times(\mu\Hbfh)),  \label{strong-identities-a} \\
\omegaboldh &
\overset{\eqref{L2-projection-curl},\eqref{method-semidiscrete-f}}{=}
\Qbbh (\curl\ubfh),                            \label{strong-identities-b} \\
\mu\jbfh    &
\overset{\eqref{discrete-curl},\eqref{method-semidiscrete-b}}{=}
\curlh\Bbfh,                                   \label{strong-identities-c} \\
\mu\Hbfh    &
\overset{\eqref{L2-projection-curl},\eqref{method-semidiscrete-g}}{=}
\Qbbh \Bbfh.                                   \label{strong-identities-d}
\end{align}
\end{subequations}
Equation~\eqref{method-semidiscrete-c} and the properties of the exact sequences imply
\begin{equation} \label{relation-Bh-curlEh}
\dt\Bbfh  = \curl\Ebfh
\qquad\Longrightarrow\qquad
- \dt\div\Bbfh = 0.
\end{equation}
We deduce
\begin{equation} \label{discrete-magnetic-zero-divergence}
\div\Bbfzh
= \div \Pitildez \Bbfz
\overset{\eqref{div-tilde-space},\eqref{projection:divergence-free}}{=} 0
\qquad
\overset{\eqref{relation-Bh-curlEh}}{\Longrightarrow}
\qquad
\div\Bbfh=0 \qquad \forall t \in [0,T].
\end{equation}

\paragraph*{Discrete energy preservation.}
We recall the following Poincar\'e-type inequality,
see, e.g., \cite[Corollary~3.51]{Monk-2003}:
\begin{equation} \label{Poincare:curl}
\Norm{\vbf}{} \le \cP \Norm{\curl \vbf}{}
\qquad\qquad
\forall \vbf \in H_0(\curl,\Omega)
\quad\text{with}\quad \div\vbf=0.
\end{equation}
The semi-discrete method~\eqref{method-semidiscrete}
preserves the energy of the system.
The following result is the discrete counterpart
of Theorem~\ref{theorem:energy-preservation-continuous}
and was proven in~\cite[Theorem~4]{Hu-Lee-Xu:2021}
for a specific full discretisation of~\eqref{method-semidiscrete}.

\begin{thm} \label{theorem:energy-preservation-discrete}
The following identity holds true:
\begin{equation} \label{energy-discrete-1}
\frac12\dt \Norm{\ubfh}{}^2
+ \frac{\cL\mumo}{2} \dt\Norm{\Bbfh}{}^2
+\Remo \Norm{\curl \ubfh}{}^2
+ \cL \Rmmo \Norm{\jbfh}{}^2
= (\fbf,\ubfh).
\end{equation}
For all~$t$ in~$(0,T]$ and~$\cP$ as in~\eqref{Poincare:curl},
we also have the following upper bound:
\begin{equation} \label{energy-discrete-2}
\begin{split}
& \Norm{\ubfh(\cdot,t)}{}^2
+ \cL \mumo \Norm{\Bbfh(\cdot,t)}{}^2
+ \Remo \int_0^t \Norm{\curl \ubf (\cdot,s)}{}^2 \ds
+ \cL \Rmmo \int_0^t \Norm{\jbfh(\cdot,s)}{}^2 \ds \\
& \le \Norm{\ubfzh(\cdot,t)}{}^2
+ \cL \mumo \Norm{\Bbfzh(\cdot,t)}{}^2
+ \Re \cP \int_0^t \Norm{\fbf(\cdot,s)}{}^2 \ds .
\end{split}
\end{equation}
\end{thm}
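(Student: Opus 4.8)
The plan is to prove the energy identity~\eqref{energy-discrete-1} by a sequence of test-function choices that exactly mimic the continuous argument behind Theorem~\ref{theorem:energy-preservation-continuous}, then deduce the bound~\eqref{energy-discrete-2} by integrating in time and applying the Poincar\'e inequality~\eqref{Poincare:curl} together with the Young inequality. First I would test~\eqref{method-semidiscrete-a} with $\vbfh = \ubfh$, killing the convection term $(\ubfh\times\omegaboldh,\ubfh)=0$ and the pressure term $(\nabla\Ph,\ubfh)=0$ (the latter by~\eqref{method-semidiscrete-e}), which leaves
\[
\frac12\dt\Norm{\ubfh}{}^2 + \Remo\Norm{\curl\ubfh}{}^2 - \cL(\jbfh\times(\mu\Hbfh),\ubfh) = (\fbf,\ubfh).
\]
Next I would test~\eqref{method-semidiscrete-d} with $\Gbfh=\jbfh$, giving $\Rmmo\Norm{\jbfh}{}^2 = (\Ebfh,\jbfh) + (\ubfh\times(\mu\Hbfh),\jbfh)$; note $(\ubfh\times(\mu\Hbfh),\jbfh) = -(\jbfh\times(\mu\Hbfh),\ubfh)$, so this is the term that couples with the Lorentz term in the momentum equation. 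Finally I would test~\eqref{method-semidiscrete-b} with $\kbfh = \Ebfh$ and~\eqref{method-semidiscrete-c} with $\Cbfh = \mumo\Bbfh$ (legitimate since $\mu$ is a constant, hence $\mumo\Bbfh\in\Hzdivh$), obtaining $\mu(\jbfh,\Ebfh) = (\Bbfh,\curl\Ebfh)$ and $\tfrac12\mumo\dt\Norm{\Bbfh}{}^2 + \mumo(\curl\Ebfh,\Bbfh) = 0$; adding these eliminates $(\Bbfh,\curl\Ebfh)$ and yields $(\jbfh,\Ebfh) = -\tfrac12\mumo\dt\Norm{\Bbfh}{}^2$.

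The heart of the computation is then purely algebraic: multiply the $\jbfh$-test identity by $\cL$, substitute $(\Ebfh,\jbfh) = -\tfrac12\mumo\dt\Norm{\Bbfh}{}^2$ and $(\ubfh\times(\mu\Hbfh),\jbfh) = -(\jbfh\times(\mu\Hbfh),\ubfh)$, and add the result to the $\ubfh$-test identity. The two occurrences of $\cL(\jbfh\times(\mu\Hbfh),\ubfh)$ cancel, leaving exactly~\eqref{energy-discrete-1}. I would remark that~\eqref{MHD-strong-g}/\eqref{method-semidiscrete-g} is used only implicitly, through the rewriting $\mu\Hbfh$ in place of $\Bbfh$ that is already built into~\eqref{method-semidiscrete}; no separate test of~\eqref{method-semidiscrete-g} is needed for the energy balance, and~\eqref{method-semidiscrete-f} enters only to make sense of $\omegaboldh$, which drops out anyway.

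For~\eqref{energy-discrete-2}, integrate~\eqref{energy-discrete-1} over $(0,t)$; the first two terms telescope to $\tfrac12\Norm{\ubfh(\cdot,t)}{}^2 - \tfrac12\Norm{\ubfzh}{}^2$ and $\tfrac{\cL\mumo}{2}(\Norm{\Bbfh(\cdot,t)}{}^2 - \Norm{\Bbfzh}{}^2)$, while the dissipative terms give the two time integrals on the left-hand side. On the right we bound $\int_0^t(\fbf,\ubfh)\,\ds$: by~\eqref{method-semidiscrete-e} and~\eqref{discrete-magnetic-zero-divergence}, $\ubfh$ is discretely divergence-free, so~\eqref{Poincare:curl} gives $\Norm{\ubfh}{}\le\cP\Norm{\curl\ubfh}{}$, and then Cauchy--Schwarz plus Young with weight tuned to $\Remo$ yields $\int_0^t(\fbf,\ubfh)\,\ds \le \tfrac12\Re\cP\int_0^t\Norm{\fbf}{}^2\,\ds + \tfrac{\Remo}{2}\int_0^t\Norm{\curl\ubfh}{}^2\,\ds$ (up to the precise constant bookkeeping claimed in~\eqref{energy-discrete-2}); the last term is absorbed into the dissipation on the left. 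Multiplying through by $2$ gives the stated inequality. The only mild subtlety to flag is that the Poincar\'e constant $\cP$ applies to $\ubfh$ because its discrete divergence vanishes pointwise in the sense of~\eqref{discrete-magnetic-zero-divergence}-type reasoning for the velocity, i.e.\ via~\eqref{method-semidiscrete-e} and the exact-sequence property; once that is in hand the estimate is routine. (I would also note in passing the apparent typo in~\eqref{energy-discrete-2}, where $\Norm{\curl\ubf}{}$ and the spurious time argument on $\Norm{\ubfzh}{}$, $\Norm{\Bbfzh}{}$ should read $\Norm{\curl\ubfh}{}$ and $\Norm{\ubfzh}{}$, $\Norm{\Bbfzh}{}$.)
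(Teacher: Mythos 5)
Your argument is correct and reproduces the identity \eqref{energy-discrete-1} exactly; the overall skeleton (test \eqref{method-semidiscrete-a} with $\ubfh$, kill the convection term via \eqref{trilinear:vanishes} and the pressure term via \eqref{method-semidiscrete-e}, then generate $\tfrac{\cL\mumo}{2}\dt\Norm{\Bbfh}{}^2+\cL\Rmmo\Norm{\jbfh}{}^2$ from the magnetic equations so that the two Lorentz-type terms cancel, and finally integrate in time with Poincar\'e and Young for \eqref{energy-discrete-2}) is the same as the paper's. The middle of your proof takes a genuinely more direct route, though: you work purely with the weak equations, testing \eqref{method-semidiscrete-d} with $\Gbfh=\jbfh$, \eqref{method-semidiscrete-b} with $\kbfh=\Ebfh$ and \eqref{method-semidiscrete-c} with $\Cbfh=\mumo\Bbfh$, so that $(\Bbfh,\curl\Ebfh)$ cancels and $(\jbfh,\Ebfh)=-\tfrac{\mumo}{2}\dt\Norm{\Bbfh}{}^2$; the paper instead first rewrites the scheme through the ``strong'' identities \eqref{strong-identities} and the operators $\Qbbh$ and $\curlh$ (tests \eqref{method-semidiscrete-c} with $\Cbfh=\Bbfh$, substitutes $\Ebfh=\Rmmo\jbfh-\Qbbh(\ubfh\times\Qbbh\Bbfh)$, and uses the adjoint relation \eqref{discrete-curl}). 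Your version buys a shorter argument that needs neither the auxiliary operators nor equation \eqref{method-semidiscrete-g} (as you correctly observe, $\mu\Hbfh$ cancels directly between \eqref{method-semidiscrete-a} and \eqref{method-semidiscrete-d}); the paper's version has the advantage of setting up the identities \eqref{strong-identities}, which are reused later in the error analysis. You also correctly spot the typos $\ubf\mapsto\ubfh$ in \eqref{energy-discrete-2}.

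One caveat on your final step: your justification that \eqref{Poincare:curl} applies to $\ubfh$ because ``its discrete divergence vanishes pointwise'' is not accurate. For a N\'ed\'elec field in $\Hzcurlh$, the condition $(\ubfh,\nabla\Qh)=0$ for all $\Qh\in\Hznablah$ does \emph{not} imply $\div\ubfh=0$ in the distributional sense (the argument \eqref{discrete-magnetic-zero-divergence} works for $\Bbfh\in\Hzdivh$, not for the velocity). What is needed, and true, is the discrete Poincar\'e--Friedrichs inequality for discretely divergence-free edge-element functions, with a constant independent of $\h$; the paper itself invokes \eqref{Poincare:curl} at this point with the same lack of precision, so this does not distinguish your proof from theirs, but the pointwise claim should be removed.
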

\begin{proof}
We provide details of the proof for completeness
since method~\eqref{method-semidiscrete} contains
more physical parameters than that in~\cite{Hu-Lee-Xu:2021}.

Standard properties of the cross product imply
\begin{equation} \label{trilinear:vanishes}
(\ubf \times \vbf,\ubf) = 0
\qquad\qquad\qquad
\forall \ubf,\ \vbf \in [L^2(\Omega)]^3.
\end{equation}
Take $\vbfh=\ubfh$ in~\eqref{method-semidiscrete-a} and~$\Qh=\Ph$ in~\eqref{method-semidiscrete-f}.
With this choice, the trilinear term involving the vorticity vanishes due to~\eqref{trilinear:vanishes}.
Therefore, using~\eqref{strong-identities-c} and~\eqref{strong-identities-d},
we write
\begin{equation} \label{initial-estimate-energy-preservation-discrete}
(\dt \ubfh, \ubfh) 
+ \Remo \Norm{\curl\ubfh}{}^2
- \cL\mumo ((\curlh\Bbfh)\times\Qbbh \Bbfh,\ubfh)
= (\fbf,\ubfh).
\end{equation}
On the other hand, picking~$\Cbfh=\Bbfh$ in~\eqref{method-semidiscrete-c},
we arrive at
\[
\begin{split}
(\dt \Bbfh, \Bbfh)
& \overset{\eqref{method-semidiscrete-c}}{=} - (\curl\Ebfh,\Bbfh) 
  \overset{\eqref{strong-identities-a}}{=} -(\curl[\Rmmo\jbfh-\Qbbh(\ubfh\times\Qbbh\Bbfh)],\Bbfh)   \\
& \overset{\eqref{discrete-curl}}{=} -\Rmmo(\jbfh,\curlh\Bbfh) + (\ubfh\times\Qbbh\Bbfh,\curlh\Bbfh)    \\
& \overset{\eqref{strong-identities-c}}{=} -\Rmmo\mu \Norm{\jbfh}{}^2 + (\ubfh\times\Qbbh\Bbfh, \curlh\Bbfh)  .    
\end{split}
\]
Exploiting the cross product's properties, we deduce
\[
\cL\mumo (\dt \Bbfh,\Bbfh)
 + \cL \Rmmo \Norm{\jbfh}{}^2
 = \cL \mumo (\ubfh\times\Qbbh\Bbfh, \curlh \Bbfh) 
 = - \cL \mumo ((\curlh \Bbfh) \times \Qbbh\Bbfh , \ubfh).
\]
Inserting this identity in~\eqref{initial-estimate-energy-preservation-discrete} entails
\[
(\dt \ubfh, \ubfh) 
+ \Remo \Norm{\curl\ubfh}{}^2
+ \cL\mumo (\dt \Bbfh,\Bbfh) 
+ \cL \Rmmo \Norm{\jbfh}{}^2
= (\fbf,\ubfh).
\]
Inequality~\eqref{energy-discrete-1} follows.
\medskip

Using~\eqref{MHD-strong-e} and~\eqref{Poincare:curl}, we deduce
\[
\begin{split}
\vert (\fbf,\ubfh) \vert
& \le \Norm{\fbf}{} \cP \Norm{\curl \ubf}{}
  \le \frac{\Re\cP}{2} \Norm{\fbf}{}^2 + \frac{\Remo}{2} \Norm{\curl \ubf}{}^2.
\end{split}
\]
Inequality~\eqref{energy-discrete-2} follows inserting
the above bound in~\eqref{energy-discrete-1} and performing standard manipulations.
\end{proof}

\paragraph*{Discrete magnetic and cross helicities-preservation.}
The finite element formulation~\eqref{method-semidiscrete}
is helicity-preserving in the ideal case.
In fact, the following result was proven in \cite[Theorems~5 and~6]{Hu-Lee-Xu:2021}
and is the discrete counterpart of Theorem~\ref{theorem:helicity-preservation-continuous}.
We report here below the result explicitly and review
also its proof in Appendix~\ref{appendix:helicity}
for completeness and since method~\eqref{method-semidiscrete} contains
more physical parameters than that in~\cite{Hu-Lee-Xu:2021}.

\begin{thm} \label{theorem:helicity-discrete}
Let~$\Abfh$ be any potential of the discrete magnetic field~$\Bbfh$, i.e.,
\[
\Abfh \in \Hzcurlh
\qquad\qquad \text{ be such that} \qquad\qquad
\curl \Abfh = \Bbfh.
\]
Then, the two following identities involving the discrete magnetic and cross helicities hold true:
\begin{subequations} \label{helicity-discrete}
\begin{align}
 \dt (\Bbfh, \Abfh)
& = -2  \Rmmo\mu (\Hbfh , \jbfh)
= -2  \Rmmo\mumo (\Qbbh \Bbfh, \curlh\Bbfh), \label{helicity-discrete-a} \\
 \dt (\ubfh, \Bbfh )
& = -\Remo\mu\ a(\ubfh, \Hbfh)
    -  \Rmmo (\curl \ubfh, \jbfh)
    + \mu(\fbf, \Hbfh) \nonumber \\
& = -\Remo a(\ubfh, \Qbbh \Bbfh)
    -  \Rmmo\mumo (\curl \ubfh, \curlh\Bbfh )
    + (\fbf, \Qbbh \Bbfh) \label{helicity-discrete-b}.
\end{align}
\end{subequations}
\end{thm}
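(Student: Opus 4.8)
The plan is to differentiate each discrete helicity in time, substitute the discrete evolution laws, and then exploit the structural identities~\eqref{strong-identities}, the skew-symmetry~\eqref{trilinear:vanishes} of the cross-product trilinear form, the self-adjointness and polynomial preservation of~$\Qbbh$, and the discrete exact sequence~\eqref{discrete-exact-sequence}. A recurring point is that every boundary term produced by integration by parts vanishes: all discrete fields live in spaces with homogeneous tangential/normal traces, and $\div\Bbfh=0$ for all times by~\eqref{discrete-magnetic-zero-divergence}. Note also that~\eqref{method-semidiscrete-c} together with the exact sequence gives the \emph{strong} identity $\dt\Bbfh=-\curl\Ebfh$ in~$\Hzdivh$, which we use repeatedly.

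For the magnetic helicity, write $\dt(\Bbfh,\Abfh)=(\dt\Bbfh,\Abfh)+(\Bbfh,\dt\Abfh)$. Using $\dt\Bbfh=-\curl\Ebfh$ and integrating by parts (boundary term zero since $\Ebfh\in\Hzcurlh$) together with $\curl\Abfh=\Bbfh$ gives $(\dt\Bbfh,\Abfh)=-(\Ebfh,\Bbfh)$. For the other term, differentiating $\curl\Abfh=\Bbfh$ yields $\curl(\dt\Abfh+\Ebfh)=0$, so by~\eqref{discrete-exact-sequence} $\dt\Abfh+\Ebfh=\nabla\psi_\h$ for some $\psi_\h\in\Hznablah$; since $(\Bbfh,\nabla\psi_\h)=-(\div\Bbfh,\psi_\h)=0$, we get $(\Bbfh,\dt\Abfh)=-(\Ebfh,\Bbfh)$ as well (the same argument shows $(\Bbfh,\Abfh)$ does not depend on the chosen potential~$\Abfh$). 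Hence $\dt(\Bbfh,\Abfh)=-2(\Ebfh,\Bbfh)$. Finally, since $\Ebfh\in\Hzcurlh$ we may write $(\Ebfh,\Bbfh)=(\Ebfh,\Qbbh\Bbfh)$ and test~\eqref{method-semidiscrete-d} with $\Gbfh=\Qbbh\Bbfh$: by~\eqref{strong-identities-d} the convective term is $(\ubfh\times\Qbbh\Bbfh,\Qbbh\Bbfh)=0$ from~\eqref{trilinear:vanishes}, so $(\Ebfh,\Bbfh)=\Rmmo(\jbfh,\Qbbh\Bbfh)=\Rmmo\mu(\jbfh,\Hbfh)$. This is~\eqref{helicity-discrete-a}; the second form follows from~\eqref{strong-identities-c} and~\eqref{strong-identities-d}.

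For the cross helicity, write $\dt(\ubfh,\Bbfh)=(\dt\ubfh,\Bbfh)+(\ubfh,\dt\Bbfh)$. Since $\dt\ubfh\in\Hzcurlh$ we have $(\dt\ubfh,\Bbfh)=(\dt\ubfh,\Qbbh\Bbfh)$; choosing $\vbfh=\Qbbh\Bbfh$ in~\eqref{method-semidiscrete-a} and noting that the pressure term $(\nabla\Ph,\Qbbh\Bbfh)=(\nabla\Ph,\Bbfh)=-(\Ph,\div\Bbfh)=0$ and the Lorentz term vanishes by~\eqref{strong-identities-d} and~\eqref{trilinear:vanishes}, we obtain $(\dt\ubfh,\Bbfh)=(\ubfh\times\omegaboldh,\Qbbh\Bbfh)-\Remo a(\ubfh,\Qbbh\Bbfh)+(\fbf,\Qbbh\Bbfh)$. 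For the second piece, $(\ubfh,\dt\Bbfh)=-(\ubfh,\curl\Ebfh)=-(\curl\ubfh,\Ebfh)$ by integration by parts; inserting~\eqref{strong-identities-a}, using the self-adjointness of~$\Qbbh$ together with $\Qbbh\curl\ubfh=\omegaboldh$ from~\eqref{strong-identities-b}, and the cross-product identities, rewrites the convective contribution as $-(\ubfh\times\omegaboldh,\Qbbh\Bbfh)$, so $(\ubfh,\dt\Bbfh)=-\Rmmo(\curl\ubfh,\jbfh)-(\ubfh\times\omegaboldh,\Qbbh\Bbfh)$. Adding the two pieces, the $(\ubfh\times\omegaboldh,\Qbbh\Bbfh)$ terms cancel, leaving $\dt(\ubfh,\Bbfh)=-\Remo a(\ubfh,\Qbbh\Bbfh)-\Rmmo(\curl\ubfh,\jbfh)+(\fbf,\Qbbh\Bbfh)$; replacing $\Qbbh\Bbfh=\mu\Hbfh$ and $\mu\jbfh=\curlh\Bbfh$ via~\eqref{strong-identities-c}--\eqref{strong-identities-d} yields both forms in~\eqref{helicity-discrete-b}.

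The individual computations are short; the difficulty is organisational. The main obstacle is making every integration-by-parts boundary term provably vanish, which forces one to keep careful track of which space each discrete field belongs to and to use $\div\Bbfh=0$; and, for the magnetic helicity, to handle the time derivative of the non-unique potential~$\Abfh$ through the discrete exact sequence. For the cross helicity, the crucial structural fact is that the discrete vorticity equals the $L^2(\Omega)$-projection $\Qbbh\curl\ubfh$: this is exactly what makes the convective trilinear contributions coming from $(\dt\ubfh,\Bbfh)$ and from $(\ubfh,\dt\Bbfh)$ cancel, and without this compatibility the cross helicity would fail to be preserved.
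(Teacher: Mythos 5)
Your proposal is correct and follows essentially the same route as the paper's Appendix~\ref{appendix:helicity}: the same strong identities $\dt\Bbfh=-\curl\Ebfh$ and~\eqref{strong-identities}, the gradient representation of $\dt\Abfh+\Ebfh$ via the discrete exact sequence combined with $\div\Bbfh=0$, the test $\vbfh=\Qbbh\Bbfh$ in~\eqref{method-semidiscrete-a}, and the cancellation of the convective terms through $\omegaboldh=\Qbbh(\curl\ubfh)$ and the cross-product identities. The only differences are cosmetic (you split $\dt(\Bbfh,\Abfh)$ into two terms each equal to $-(\Ebfh,\Bbfh)$ instead of the paper's symmetric trick yielding $2(\dt\Abfh,\Bbfh)$, and you test~\eqref{method-semidiscrete-d} directly rather than quoting~\eqref{strong-identities-a}), so no substantive comparison is needed.
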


A consequence of Theorem~\ref{theorem:helicity-discrete} is that
the scheme is helicity-preserving under the assumptions in Remark~\ref{remark:helicity-continuous}.

\begin{rem} \label{remark:lack-helicity-4fields}
The four fields formulation in~\cite{Hu-Ma-Xu:2017} is not helicity-preserving;
see~\cite[Section~3.1]{Hu-Lee-Xu:2021}.
The reason is the presence of ``spurious terms'' appearing in the counterpart of the identities in~\eqref{helicity-discrete} for the four fields formulation.
For instance, given~$\Ibb$ the identity operator, we have
\[
 \dt (\Bbfh, \Abfh)
= -2 \Rmmo (\Qbbh \Bbfh, \curlh\Bbfh)
  + 2 (\ubfh \times \Bbfh, (\Qbbh-\Ibb)\Bbfh) .
\]
The first term on the right-hand sides resembles that in the first equation of~\eqref{helicity-discrete};
the second one is a pollution term measuring the distance of~$\Bbfh$ from~$\Hzcurlh$.
Using instead the seven-fields formulation~\eqref{method-semidiscrete},
the second term vanishes due to the presence
of extra projection terms in the formulation.
\end{rem}

\paragraph*{Well-posedness of the semi-discrete formulation.}
The semi-discrete method~\eqref{method-semidiscrete} is well-posed.
We follow the guidelines of~\cite[Section~5.5]{BeiraodaVeiga-Dassi-Manzini-Mascotto:2023}:
we show that the semi-discrete method can be written
as a first order Cauchy problem with a quartic nonlinearity;
then, the energy bounds in Theorem~\ref{theorem:energy-preservation-discrete}
imply that the nonlinear term is Lipschitz,
whence standard ODE results imply the well-posedness.
\medskip

We begin by taking the time derivative in~\eqref{method-semidiscrete-b}:
\begin{equation} \label{1st-step-wp}
\mu(\dt \jbfh, \kbfh) 
- (\dt\Bbfh,\curl\kbfh) =0
\qquad\qquad
\forall \kbfh \in \Hzcurlh.
\end{equation}
From~\eqref{strong-identities-a} and~\eqref{strong-identities-d},
we further have
\begin{equation} \label{explicit-jbfh}
\jbfh= \Rm \left( \Ebfh + \Qbbh(\ubfh \times \Qbbh \Bbfh) \right).
\end{equation}
We plug this identity in~\eqref{1st-step-wp} and deduce
\begin{equation} \label{2nd-step-wp}
\mu\Rm(\dt\Ebfh, \kbfh)
+ \mu\Rm (\dt \Qbbh(\ubfh\times \Qbbh\Bbfh), \kbfh) 
- (\dt \Bbfh, \curl \kbfh) = 0
\qquad \forall \kbfh \in \Hzcurlh.
\end{equation}
Next, we use~\eqref{2nd-step-wp}
and~\eqref{method-semidiscrete-c} with~$\Cbfh = \curl\kbfh$,
and write
\[
\mu\Rm (\dt\Ebfh, \kbfh)
+ \mu\Rm(\dt \Qbbh(\ubfh\times\Qbbh\Bbfh),\kbfh) 
+ (\curl\Ebfh, \curl\kbfh) = 0
\qquad
\forall \kbfh \in \Hzcurlh.
\]
We condense out the pressure~$\Ph$ from the system
by recalling~\eqref{method-semidiscrete-e},
and restricting test and trial velocity fields
to the space
\[
\Hzbarcurlh
:= \{ \vbfh \in \Hzcurlh \mid (\vbfh, \nabla\Qh) = 0 \quad
                \forall \Qh \in \Hznablah  \}.
\]
Introduce the reduced, discrete test and trial space
\[
\Xbarh := \Hzbarcurlh \times \Hzcurlh \times \Hzdivh .
\]
With this information at hand,
and recalling the strong identities in~\eqref{strong-identities}
and~\eqref{explicit-jbfh},
$\ubfh$, $\Bbfh$, and~$\Ebfh$ in $\Xbarh$
are the solutions to the following Cauchy problem:
for all~$\vbfh$, $\kbfh$, and~$\Cbfh$ in~$\Xbarh$,
\begin{equation} \label{semidiscrete-partial}
\begin{cases}
(\dt \ubfh,\vbfh)
    \!-\! (\ubfh\times \Qbbh(\curl\ubfh), \vbfh) \!+\! \Remo a(\ubfh, \vbfh)
    \!+\! \cL (\jbfh \times \Qbbh\Bbfh,\vbfh) = (\fbf,\vbfh) \\
\mu\Rm (\dt \Ebfh, \kbfh) + (\curl\Ebfh, \curl\kbfh) 
    + \mu\Rm (\dt \Qbbh(\ubfh\times\Qbbh\Bbfh),\kbfh)=0   \\
(\dt\Bbfh, \Cbfh) + (\curl\Ebfh, \Cbfh) = 0 .
\end{cases}
\end{equation}
Inverting the corresponding ``mass'' matrices,
from the first, third, and second identities above,
we deduce that
\begin{itemize}
    \item $\dt\ubfh$ can be interpreted as a cubic function in terms of
    $(\ubfh, \Bbfh, \Ebfh)$;
    \item $\dt\Bbfh$ can be interpreted as a linear function in terms of~$\Ebfh$;
    \item $(\dt \Qbbh(\ubfh\times\Qbbh\Bbfh),\kbfh)$ can be interpreted as a quartic form in terms of $(\ubfh, \Bbfh, \Ebfh)$;
    in fact, we can write
    \[
    \begin{split}
    (\dt \Qbbh(\ubfh\times\Qbbh\Bbfh),\kbfh)
    & = (\dt (\ubfh\times\Qbbh\Bbfh),\kbfh) \\
    & = (\dt \ubfh \times \Qbbh\Bbfh,\kbfh)
        + (\ubfh\times (\dt\Qbbh\Bbfh),\kbfh).
    \end{split}
    \]
    Resorting to the information above on~$\dt\ubfh$ and~$\dt\Bbfh$,
    we can interpret~$\dt\Ebfh$ as a quartic form in terms of $(\ubfh, \Bbfh, \Ebfh)$.
\end{itemize}
In other words, $(\ubfh,\Bbfh,\Ebfh)$ are the solutions
to a first order Cauchy problem
with quartic right-hand side.
On the other hand, Theorem~\ref{theorem:energy-preservation-discrete}
and~\eqref{method-semidiscrete-d}
state that the three fields above are bounded,
thereby entailing a uniform Lipschitz nonlinearity on the right-hand side.
Standard ordinary differential equation theory results
imply the well-posedness of the method; see, e.g., \cite{Layton:2008}.

The well-posedness of the full semi-discrete system~\eqref{method-semidiscrete}
follows from the well-posedness of the reduced semi-discrete system~\eqref{semidiscrete-partial}
and the identities in~\eqref{strong-identities},
in the sense that the four remaining unknowns are derived from the
three solutions to~\eqref{semidiscrete-partial}.

\section{Error estimates for the semi-discrete scheme} \label{section:convergence-semi-discrete}
Property~\eqref{discrete-magnetic-zero-divergence}
implies that in system~\eqref{method-semidiscrete}
we seek discrete divergence free magnetic fields,
whence we can replace~$\Hzdivh$ by~$\Hztildedivh$ in~\eqref{div-tilde-space}.
In other words, the total test and trial space~$\Xh$ is replaced by
\[
\Xtildeh
:=  [\Hzcurlh]^5 \times \Hztildedivh \times \Hznablah.
\]
This will be relevant in what follows, since the test field~$\Cbfh$ are divergence free,
which allows us to use the properties
of the operator in~\eqref{projection:divergence-free}.

We prove a fundamental result,
which will be instrumental in deriving the error estimates
in Corollary~\ref{corollary:a-priori-semidiscrete} below.
To this aim,
given~$\BbfI$ and~$\PI$ as in~\eqref{projection:divergence-free}
and~\eqref{definition:Lagrangian-interpolant},
we introduce
\begin{equation} \label{error-type-quantities}
\eubf:= \ubfh-\PiNcalu,
\qquad
\eBbf:= \Bbfh-\BbfI,
\qquad
\eEbf:= \Ebfh-\PiNcalE,
\qquad
\eP  := \Ph  -\PI.
\end{equation}
Given
\[
\chih(\vbf, \Cbf) := \Qbbh (\vbf\times \Qbbh\Cbf)
\qquad\qquad \forall \vbf \in \Hzcurl,
\quad \forall \Cbf \in \Hztildediv,
\]
we further define
\begin{equation} \label{j-quantities}
\ejbf:= \eEbf + \chih(\eubf, \Bbfh),
\qquad\qquad\qquad
\jbfI:= \Rm  \left( \PiNcalE + \chih(\PiNcalu,\Bbfh) \right).
\end{equation}

\begin{thm} \label{theorem:partial-convergence}
Consider sequences~$\{\taun\}$ of shape-regular, quasi-uniform meshes.
\footnote{The quasi-uniformity assumption is used
in the estimates for the term~$T_5$
in order to apply Proposition~\ref{proposition:L-infty-stab-PiNcal}.}
Let the solution to~\eqref{MHD-weak} be sufficiently smooth.
Then, there exists a positive constant~$C$ independent of~$\h$
such that, for all~$t$ in~$(0,T]$,
\small{\[
\Norm{\eubf(t)}{}^2 + \Norm{\eBbf(t)}{}^2
+ \int_0^t \Norm{\curl \eubf(s)}{}^2 \ds
+ \int_0^t \Norm{\ejbf(s)}{}^2 \ds
\le C( \Norm{\eubf(0)}{}^2 + \Norm{\eBbf(0)}{}^2 + \h^{2(k+1)}).
\]}\normalsize
The constant~$C$ includes regularity terms of the solution to~\eqref{MHD-weak},
the shape-regularity parameter~$\sigma$ of the mesh,
and the polynomial degree~$k$.
\end{thm}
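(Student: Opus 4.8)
The plan is to carry out a standard energy-type argument on the error equations obtained by subtracting the semi-discrete scheme~\eqref{method-semidiscrete} from a suitable projection of the continuous weak formulation~\eqref{MHD-weak}. First, I would write the equations satisfied by the projected errors $\eubf$, $\eBbf$, $\eEbf$, $\eP$: testing the continuous equations~\eqref{MHD-weak-a}, \eqref{MHD-weak-c}, \eqref{MHD-weak-d} with discrete test functions and using the defining properties of the projectors $\PiNcal$ (see~\eqref{interpolation:curl}), $\Pitildez$ (see~\eqref{projection:divergence-free}), $\Qbbh$, and the Lagrange interpolant, together with the commuting property~\eqref{commuting-operators} and the strong identities~\eqref{strong-identities}. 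The crucial simplification is that, because of the commuting diagram and the orthogonality built into the projectors, the diffusion and coupling terms reproduce the clean structure that made Theorems~\ref{theorem:energy-preservation-discrete} and~\ref{theorem:helicity-discrete} work; the consistency errors are then controlled only through the interpolation estimates of Lemmas~\ref{lemma:Lagrangian-interpolant}, \ref{lemma:Monk-Nedelec}, Propositions~\ref{proposition:PiNcalEbf-estimates}, \ref{proposition:Hdiv-approx}, \ref{proposition:approximation:L2-proj-Hcurl}, each of order $\h^{k+1}$.

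Next, I would test the $\eubf$-equation with $\vbfh=\eubf$ (legitimate since $\eubf$ is discretely divergence free, mirroring the condensation of the pressure via~\eqref{method-semidiscrete-e} and the space~$\Hzbarcurlh$), the $\eBbf$-equation with $\Cbfh=\cL\mumo\eBbf$, and combine with the $\ejbf$-relation~\eqref{j-quantities}, exactly as in the proof of Theorem~\ref{theorem:energy-preservation-discrete}. The symmetric trilinear/Lorentz contributions $(\eubf\times\Qbbh(\curl\eubf),\eubf)$ and the cross-helicity-type cancellations vanish thanks to~\eqref{trilinear:vanishes}, leaving on the left-hand side
\[
\frac12\dt\Norm{\eubf}{}^2 + \frac{\cL\mumo}{2}\dt\Norm{\eBbf}{}^2 + \Remo\Norm{\curl\eubf}{}^2 + \cL\Rmmo\Norm{\ejbf}{}^2,
\]
and on the right-hand side a collection of residual terms $T_1,\dots$ involving either the time derivatives of the interpolation errors (e.g.\ $(\dt(\ubf-\PiNcalu),\eubf)$), the diffusion residual $\Remo a(\ubf-\PiNcalu,\eubf)=0$ which vanishes by~\eqref{interpolation:curl}, or nonlinear residuals coming from the difference between $\ubf\times\Bbf$ and its discrete analogue $\Qbbh(\ubfh\times\Qbbh\Bbfh)$. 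The latter I would split by adding and subtracting $\PiNcalu$ and $\BbfI$ inside the cross products, isolating terms that are linear in $(\eubf,\eBbf)$ with coefficients bounded in $L^\infty$ (here the solution regularity and the stability bounds~\eqref{stability:L2-in-Linfty} enter) plus genuinely high-order interpolation residuals; the $L^\infty$-stability Proposition~\ref{proposition:L-infty-stab-PiNcal} (this is where quasi-uniformity is used, for the term labelled $T_5$) handles the one place where $\PiNcalE$ appears inside a product and must be bounded uniformly.

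I would then apply Young's inequality to each residual term, absorbing the $\Norm{\curl\eubf}{}^2$ and $\Norm{\ejbf}{}^2$ contributions into the left-hand side with small constants (using the Poincar\'e inequality~\eqref{Poincare:curl} to control $\Norm{\eubf}{}$ by $\Norm{\curl\eubf}{}$ where needed, since $\eubf$ is discretely divergence free), and collecting the remaining terms into $C\h^{2(k+1)}(\text{regularity})$ plus a term of the form $C(\Norm{\eubf}{}^2+\Norm{\eBbf}{}^2)$. Integrating in time over $(0,t)$ and invoking Gr\"onwall's lemma then yields the stated bound, with the constant $C$ absorbing $T$, the Reynolds numbers, $\mu$, $\cL$, $\cP$, $\sigma$, $k$, and the Sobolev norms of the exact solution. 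The main obstacle I anticipate is the careful bookkeeping of the nonlinear coupling residuals: one must ensure that every occurrence of a discrete unknown inside a cross product is either paired against a high-order interpolation error or carries an $L^\infty$-bounded exact-solution factor, so that no term of the form (error)$\times$(error)$\times$(unbounded quantity) survives — in particular the term where $\eEbf$ or $\PiNcalE$ sits inside $\chih$, which is precisely why Proposition~\ref{proposition:L-infty-stab-PiNcal} and the quasi-uniformity hypothesis are needed.
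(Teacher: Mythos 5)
Your proposal is correct and follows essentially the same route as the paper's proof: build the error equations for $\eubf$, $\eBbf$, $\eEbf$ via the projectors $\PiNcal$, $\Pitildez$ and the Lagrange interpolant, test so as to reproduce the discrete energy structure (yielding $\frac12\dt\Norm{\eubf}{}^2+\frac{\cL\mumo}{2}\dt\Norm{\eBbf}{}^2+\Remo\Norm{\curl\eubf}{}^2$ plus the $\Norm{\ejbf}{}^2$ term), kill the pressure contribution by discrete divergence-freeness, bound the nonlinear residuals by adding and subtracting interpolants with $L^\infty$ control from Proposition~\ref{proposition:L-infty-stab-PiNcal} (where quasi-uniformity enters), and conclude with Young and Gr\"onwall. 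The small deviations (e.g.\ noting $a(\ubf-\PiNcalu,\eubf)=0$ by~\eqref{interpolation:curl}, which the paper simply estimates, and the unnecessary appeal to the Poincar\'e inequality for $\eubf$) are immaterial to the argument.
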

\begin{proof}
Using standard properties of the cross and scalar products,
and~\eqref{strong-identities-d},
we rewrite~\eqref{method-semidiscrete-a} as
\small{\begin{equation} \label{5.13a}
(\dt \ubfh,\vbfh)
\!-\! (\ubfh\times\omegaboldh,\vbfh)
\!+\! \Remo a(\ubfh,\vbfh)
\!+\! \cL (\jbfh,\vbfh \times \Qbbh\Bbfh)
\!+\! (\nabla\Ph,\vbfh)
\!=\! (\fbf,\vbfh)
\quad
\forall \vbfh \in \Hzcurlh.
\end{equation}}\normalsize
The reduced version of~\eqref{method-semidiscrete-c} reads
\begin{equation} \label{5.13c}
(\dt \Bbfh, \Cbfh) + (\curl \Ebfh, \Cbfh) = 0
\qquad\qquad \forall \Cbfh \in \Hztildedivh .
\end{equation}
Adding and subtracting~$\PiNcalE$ and $\PiNcalu$
in~\eqref{explicit-jbfh}, we deduce
\begin{equation} \label{6.6}
\jbfh 
= \Rm\left( \Ebfh-\PiNcalE + \chih ( \ubfh - \PiNcalu , \Bbfh)
  + \PiNcalE + \chih ( \PiNcalu , \Bbfh) \right).
\end{equation}
We substitute~\eqref{6.6} in~\eqref{5.13a},
add and subtract~$\PiNcalu$ and~$\PI$,
use~\eqref{strong-identities-b},
and get
\small{\begin{equation} \label{6.6.5}
\begin{split}
& (\dt \eubf,\vbfh) - (\eubf \times \omegaboldh,\vbfh) + \Remo a(\eubf,\vbfh) 
 + \cL\Rm (\eEbf + \chih(\eubf,\Bbfh), \chih (\vbfh, \Bbfh)) + (\nabla\eP,\vbfh) \\
& = (\fbf,\vbfh) - (\dt \PiNcalu, \vbfh) + (\PiNcalu \times \Qbbh (\curl \ubfh)),\vbfh) - \Remo a(\PiNcalu,\vbfh) \\
& \quad - \cL\Rm (\PiNcalE + \chih(\PiNcalu,\Bbfh), \chih (\vbfh, \Bbfh)) - (\nabla \PI,\vbfh)
\qquad\qquad \forall \vbfh \in \Hzcurlh.
\end{split}
\end{equation}}\normalsize
Next, we combine~\eqref{method-semidiscrete-b}
and~\eqref{explicit-jbfh},
add and subtract~$\PiNcalE$ and~$\BbfI$,
and get
\begin{equation} \label{6.7}
\begin{split}
& \mu\Rm(\eEbf + \chih(\eubf,\Bbfh), \kbfh) 
        - (\eBbf,\curl \kbfh) \\
& \qquad = - \mu\Rm (\PiNcalE + \chih (\PiNcalu, \Bbfh), \kbfh ) 
        + (\BbfI, \curl \kbfh)
        \qquad \forall \kbfh \in \Hzcurlh.
\end{split}
\end{equation}
Besides, we have
\[
\begin{split}
& (\curl\PiNcalE, \Cbfh)
\overset{\eqref{commuting-operators}}{=}
(\Pitildez(\curl\Ebf), \Cbfh)
\overset{\eqref{def:PiNk}}{=}
(\curl\Ebf, \Cbfh) \\
& \overset{\eqref{MHD-weak-c}}{=}-(\dt\Bbf, \Cbfh)
\overset{\eqref{projection:divergence-free}}{=}
-(\Pitildez(\dt\Bbf), \Cbfh)
= -(\dt (\Pitildez\Bbf), \Cbfh)
\qquad \forall \Cbfh \in \Hztildedivh,
\end{split}
\]
whence we can write
\[
(\Pitildez(\dt\Bbf), \Cbfh) + (\curl\PiNcalE, \Cbfh) = 0
\qquad\qquad \forall \Cbfh \in \Hztildedivh.
\]
Subtracting this to~\eqref{5.13c} entails
\begin{equation} \label{6.8}
(\dt \eBbf, \Cbfh) + (\curl\eEbf , \Cbfh) = 0.
\end{equation}
Recalling~\eqref{trilinear:vanishes},
and taking~$\vbfh=\eubf$, $\kbfh=\eEbf$, and~$\Cbfh=\eBbf$
in~\eqref{6.6.5}, \eqref{6.7}, and~\eqref{6.8},
we arrive at the following set of equations:
\begin{subequations} \label{important-3-equations}
\begin{align}
& ( \dt \eubf,\eubf)
+ \Remo a(\eubf,\eubf) 
+ \cL\Rm (\eEbf + \chih(\eubf,\Bbfh), \chih (\eubf, \Bbfh)) + (\nabla\eP,\eubf) \nonumber \\
& \qquad= (\fbf,\eubf) - (\dt \PiNcalu, \eubf)
        + (\PiNcalu \times \Qbbh (\curl \ubfh)),\eubf)
        - \Remo a(\PiNcalu,\eubf) \label{6.9} \\
& \qquad\qquad - \cL\Rm (\PiNcalE + \chih(\PiNcalu,\Bbfh), \chih (\eubf, \Bbfh))
            - (\nabla \PI,\eubf), \nonumber \\[1em]
\nonumber& \mu\Rm (\eEbf + \chih(\eubf,\Bbfh), \eEbf) - (\eBbf,\curl \eEbf) \\
& \qquad= - \mu\Rm (\PiNcalE + \chih (\PiNcalu, \Bbfh), \eEbf) + (\BbfI, \curl \eEbf), \label{6.10} \\[1em]
& (\dt \eBbf, \eBbf) + (\curl\eEbf , \eBbf) = 0. \label{6.11}
\end{align}
\end{subequations}
Adding~\eqref{6.10} and~\eqref{6.11}, we obtain
\small{\begin{equation} \label{this-identity}
\mu\Rm(\eEbf + \chih(\eubf,\Bbfh), \eEbf)
  + (\dt \eBbf, \eBbf)
= - \mu\Rm(\PiNcalE + \chih(\PiNcalu, \Bbfh), \eEbf)
  + (\BbfI, \curl\eEbf).
\end{equation}}\normalsize
Multiplying~\eqref{this-identity} by~$\cL \mumo$
and adding the resulting identity to~\eqref{6.9} yield
\begin{equation} \label{LHS-RHS}
\LHS = \RHS,
\end{equation}
where
\begin{equation} \label{LHS}
\begin{split}
\LHS
& := (\dt \eubf,\eubf) + \Remo a(\eubf,\eubf) + (\nabla\eP,\eubf)\\
& \qquad   + \cL\Rm (\eEbf + \chih(\eubf,\Bbfh), \eEbf + \chih(\eubf,\Bbfh))
   + \cL \mumo(\dt \eBbf,\eBbf) \\
&  = \frac12 \dt\Norm{\eubf}{}^2 + \Remo \Norm{\curl\eubf}{}^2 +  (\nabla\eP,\eubf)
 + \cL \Rm\Norm{\ejbf}{}^2  + \frac{\cL \mumo}{2} \dt\Norm{\eBbf}{}^2
\end{split}
\end{equation}
and
\begin{equation} \label{RHS}
\begin{split}
 \RHS
& := (\fbf, \eubf) - (\dt\PiNcalu,\eubf) 
    + (\PiNcalu\times \Qbbh(\curl\ubfh),\eubf) 
    - \Remo a(\PiNcalu,\eubf)
    - (\nabla\PI,\eubf)\\
& \qquad    - \cL\Rm (\PiNcalE + \chih(\PiNcalu,\Bbfh), \eEbf 
            + \chih(\eubf,\Bbfh))
            + \cL \mumo(\BbfI,\curl\eEbf) \\
& = (\fbf, \eubf) 
    - (\dt\PiNcalu,\eubf)
    + (\PiNcalu\times \Qbbh(\curl\ubfh),\eubf)
    - \Remo a(\PiNcalu,\eubf)
    - (\nabla\PI,\eubf)\\
& \qquad    - \cL(\jbfI, \ejbf)
            + \cL \mumo (\BbfI,\curl\eEbf). \\
\end{split}
\end{equation}
The term involving~$\eP$ in~$\LHS$, see~\eqref{LHS}, vanishes:
\begin{equation} \label{mixed-vanishes}
(\nabla\eP,\eubf)
\overset{\eqref{error-type-quantities}}{=} (\nabla\eP,\ubfh) - (\nabla\eP,\PiNcalu)
\overset{\eqref{method-semidiscrete-e}}{=} - (\nabla\eP,\PiNcalu)
\overset{\eqref{interpolation:curl}}{=} - (\nabla\eP,\ubf)
\overset{\eqref{MHD-weak-e}}{=} 0.
\end{equation}
To show an upper bound on~$\RHS$,
we rewrite the term involving~$\fbf$ using~\eqref{MHD-weak-a}
with~$\vbfh=\eubf$:
\begin{equation} \label{6.14}
(\fbf,\eubf)
= (\dt \ubf, \eubf) + \Remo a(\ubf, \eubf) + (\nabla \P, \eubf)
  - (\ubf \times (\curl\ubf), \eubf) + \cL (\jbf, \eubf\times \Bbf).
\end{equation}
Observe that
\begin{equation} \label{6.15}
(\BbfI, \curl\eEbf)
\overset{\eqref{projection:divergence-free}}{=} (\Bbf, \curl\eEbf)
\overset{\eqref{MHD-weak-b}}{=} \mu(\jbf,\eEbf).
\end{equation}
Inserting~\eqref{6.14} and~\eqref{6.15} in~\eqref{RHS} yields
\small{\begin{equation} \label{T1-T2-T3-T4-T5}
\begin{split}
\RHS
& =  (\dt(\ubf-\PiNcalu),\eubf) 
        + \Remo a(\ubf-\PiNcalu,\eubf)
        + (\nabla(\P-\PI),\eubf) \\
& \quad + [(\PiNcalu\times \Qbbh(\curl\ubfh),\eubf) - (\ubf\times(\curl\ubf),\eubf)]
        + \cL [ (\jbf, \eEbf + \eubf\times\Bbf) 
        - (\jbfI, \ejbf) ]
= \sum_{j=1}^5 T_j.
\end{split}
\end{equation}}\normalsize
We estimate the five terms on the right-hand side
of~\eqref{T1-T2-T3-T4-T5} in separate steps.

\paragraph*{Estimating~$T_1$.}
Using Cauchy-Schwarz' inequality, Young's inequality,
and Lemma~\ref{lemma:Monk-Nedelec} yields
\begin{equation} \label{T1}
T_1
\le \Norm{\dt (\ubf-\PiNcalu)}{} \Norm{\eubf}{}
\le \overline C_1 \h^{2(k+1)} \Norm{\dt \ubf}{k+1}^2 + \Norm{\eubf}{}^2
=             C_1 \h^{2(k+1)}                        + \Norm{\eubf}{}^2.
\end{equation}

\paragraph*{Estimating~$T_2$.}
Using Cauchy-Schwarz' inequality, Young's inequality
with parameter~$\varepsilon$ to be fixed in~\eqref{choosing-epsilon} below,
and Lemma~\ref{lemma:Monk-Nedelec} yields
\begin{equation} \label{T2}
\begin{split}
T_2
& \le \Remo  \Norm{\curl (\ubf-\PiNcalu)}{} \Norm{\curl\eubf}{}
\le \overline C_2(\varepsilon) \h^{2(k+1)} \Norm{\curl\ubf}{k+1}^2
        + \varepsilon \Remo \Norm{\curl\eubf}{}^2\\
& =  C_2(\varepsilon) \h^{2(k+1)} + \varepsilon \Remo \Norm{\curl\eubf}{}^2.
\end{split}
\end{equation}

\paragraph*{Estimating~$T_3$.}
Using Cauchy-Schwarz' inequality, Young's inequality,
and Lemma~\ref{lemma:Lagrangian-interpolant} yields
\begin{equation} \label{T3}
T_3
\le \Norm{\nabla (\P-\PI)}{} \Norm{\eubf}{}
\le \overline C_3 \h^{2(k+1)} \Norm{\P}{k+2} + \Norm{\eubf}{}^2
\le           C_3 \h^{2(k+1)}                + \Norm{\eubf}{}^2.
\end{equation}

\paragraph*{Estimating~$T_4$.}
We have
\[
T_4= (\PiNcalu\times \Qbbh(\curl\ubfh),\eubf) - (\ubf\times(\curl\ubf),\eubf).
\]
We write
\[
\begin{split}
T_{4}
& = (\PiNcalu \times [\Qbbh (\curl\ubfh)-\curl\ubf],\eubf) 
    + ( [\PiNcalu - \ubf] \times \curl\ubf,\eubf)  \\
& \le \Norm{\PiNcalu}{L^{\infty}(\Omega)}
        \Norm{\curl\ubf - \Qbbh (\curl\ubfh)}{}
        \Norm{\eubf}{}
      + \Norm{\ubf - \PiNcalu}{}
        \Norm{\curl\ubf}{L^{\infty}} 
        \Norm{\eubf}{} .
\end{split}
\]
Note that
\[
\begin{split}
& \Norm{\curl\ubf - \Qbbh (\curl\ubfh)}{}
  \le \Norm{\curl\ubf - \Qbbh (\curl\ubf)}{}
      + \Norm{\curl(\ubf - \ubfh)}{} \\
& \le \Norm{\curl\ubf - \Qbbh (\curl\ubf)}{}
      + \Norm{\curl(\ubf - \PiNcalu)}{}
      + \Norm{\curl\eubf}{}\\
& \overset{\eqref{approximation:L2-proj-Hcurl}, \eqref{PiNcalEbf-estimates}}{\le}
    \widetilde C_{4} \h^{k+1} \Norm{\curl \ubf}{H^{k+1}(\curl,\Omega)}
    + \Norm{\curl\eubf}{}.
\end{split}
\]
Combining the two bounds above, using Young's inequality
with parameter~$\varepsilon$ to be fixed in~\eqref{choosing-epsilon} below,
and recalling Propositions~\ref{proposition:PiNcalEbf-estimates}
and~\ref{proposition:L-infty-stab-PiNcal},
we arrive at
\begin{equation} \label{T4}
T_{4}
\le C_{4} \h^{2(k+1)} 
    + \varepsilon \Norm{\curl \eubf}{}^2 
    + \left( \frac{C_{4}}{\varepsilon} + 1 \right) \Norm{\eubf}{}^2.
\end{equation}

\paragraph*{A preliminary estimate for~$T_5$.}
We are interested in estimating~$\Norm{\jbf-\jbfI}{}$,
where~$\jbfI$ is defined in~\eqref{j-quantities}.
The triangle inequality implies
\begin{equation} \label{pre-T5-1}
\Norm{\jbf-\jbfI}{}
\le \Norm{\Ebf-\PiNcalE}{}
    + \Norm{\ubf\times\Bbf - \Qbbh(\PiNcalu \times\Qbbh\Bbfh)}{} .
\end{equation}
We focus on the second term on the right-hand side:
\begin{equation} \label{pre-T5-2}
\begin{split}
& \Norm{\ubf\times\Bbf - \Qbbh(\PiNcalu \times\Qbbh\Bbfh)}{} 
 \le \Norm{\ubf\times\Bbf - \Qbbh(\ubf \times \Bbf)}{} 
        + \Norm{\Qbbh(\ubf\times\Bbf - \PiNcalu \times\Qbbh\Bbfh)}{} .
\end{split}
\end{equation}
As for the second term on the right-hand side of~\eqref{pre-T5-2},
we exploit the continuity (with constant 1) of~$\Qbbh$ in the $L^2(\Omega)$ norm
and get
\begin{equation} \label{pre-T5-3}
\begin{split}
& \Norm{\Qbbh(\ubf\times\Bbf - \PiNcalu \times\Qbbh\Bbfh)}{} 
    \le \Norm{\ubf\times\Bbf - \PiNcalu \times\Qbbh\Bbfh}{} \\
& \le \Norm{(\ubf- \PiNcalu)\times\Bbf}{}
      + \Norm{\PiNcalu \times (\Bbf-\Qbbh\Bbfh)}{} \\
& \le \Norm{\ubf- \PiNcalu}{} \Norm{\Bbf}{L^\infty(\Omega)}
        + \Norm{\PiNcalu}{L^\infty(\Omega)} \Norm{\Bbf-\Qbbh\Bbfh}{} \\
& \le \Norm{\ubf- \PiNcalu}{} \Norm{\Bbf}{L^\infty(\Omega)}
        + \Norm{\PiNcalu}{L^\infty(\Omega)}
            \Big( \Norm{\Bbf-\Qbbh\Bbf}{}
                + \Norm{\Bbf-\Pitildez\Bbf}{}
                + \Norm{\eBbf}{} \Big) .
\end{split}
\end{equation}
Collecting~\eqref{pre-T5-2} and~\eqref{pre-T5-3} into~\eqref{pre-T5-1},
and using estimates~\eqref{PiNcalEbf-estimates},
\eqref{stability-Linfty-PiNcal} (with the quasi-uniformity of the mesh),
and~\eqref{approximation:L2-proj-Hcurl},
we deduce the existence of a positive~$C_J$ independent of~$\h$
but dependent on the shape-regularity parameter~$\sigma$ of the mesh,
the polynomial degree~$k$,
and the solution to~\eqref{method-semidiscrete} such that
\begin{equation} \label{interpolation-J-estimates}
\Norm{\jbf-\jbfI}{}^2
\le C_J \h^{2(k+1)} + C_J \Norm{\eBbf}{}^2.
\end{equation}

\paragraph*{Estimating~$T_5$.}
We split
\begin{equation} \label{T51-T52}
T_5
= \cL (\jbf, \eEbf + \eubf\times\Bbf-\ejbf) + \cL (\jbf-\jbfI, \ejbf)
=: T_{5,1} +T_{5,2}.
\end{equation}
Cauchy-Schwarz' inequality,
estimate~\eqref{interpolation-J-estimates},
and Young's inequality
with parameter~$\varepsilon$ to be fixed in~\eqref{choosing-epsilon} below entail
\begin{equation} \label{T52}
T_{5,2}
\le \cL \Norm{\jbf-\jbfI}{} \Norm{\ejbf}{}
\le C_{5,2}(\varepsilon) \h^{2(k+1)}
    + \varepsilon \Norm{\ejbf}{}^2
    + C_{5,2}(\varepsilon) \Norm{\eBbf}{}^2.
\end{equation}
Next, we focus on the term~$T_{5,1}$.
We have
\small{\[
\begin{split}
\cL^{-1}T_{5,1}
& = (\jbf, \eubf\times\Bbf - \Qbbh(\eubf\times\Bbf))
  + (\jbf, \Qbbh(\eubf\times\Bbf - \eubf\times\Qbbh\Bbfh))\\
& \le
    (\jbf-\Qbbh\jbf, \eubf\times\Bbf) 
    + \Norm{\jbf}{L^\infty(\Omega)}
        \Norm{\Qbbh(\eubf\times\Bbf - \eubf\times\Qbbh\Bbfh)}{L^1(\Omega)} \\
& \overset{\eqref{stability:L2-in-Linfty}}{\le}        
    (\jbf-\Qbbh\jbf, \eubf\times\Bbf) 
    + C \Norm{\jbf}{L^\infty(\Omega)}
        \Norm{\eubf\times\Bbf - \eubf\times\Qbbh\Bbfh}{L^1(\Omega)}\\
& \le (\jbf-\Qbbh\jbf, \eubf\times\Bbf) 
    + C \Norm{\jbf}{L^\infty(\Omega)}
        \Norm{\eubf}{}
        \Norm{\Bbf - \Qbbh\Bbfh}{} \\
& \le \Norm{\jbf-\Qbbh\jbf}{} \Norm{\eubf}{} \Norm{\Bbf}{L^{\infty}(\Omega)}
    + \widetilde C \Norm{\jbf}{L^\infty(\Omega)}
        \Norm{\eubf}{}
        (\Norm{\Bbf - \Pitildez\Bbf}{}
         + \Norm{\eBbf}{} + \Norm{\Bbf - \Qbbh\Bbf}{}).
\end{split}
\]}\normalsize
We use Young's inequality, estimates~\eqref{Hdiv-approx}
and~\eqref{approximation:L2-proj-Hcurl},
and deduce the existence of positive constants~$\widetilde C_{5,1}$
and~$C_{5,1}$ such that
\begin{equation} \label{T51}
\begin{split}
T_{5,1}
& \le \widetilde C_{5,1} ( \Norm{\jbf-\Qbbh\jbf}{}^2
                            + \Norm{\Bbf-\Qbbh\Bbf}{}^2
                            + \Norm{\Bbf-\Pitildez\Bbf}{}^2
                            + \Norm{\eubf}{}^2
                            + \Norm{\eBbf}{}^2) \\
& \le C_{5,1} \h^{2(k+1)}   + \Norm{\eubf}{}^2 + \Norm{\eBbf}{}^2 .
\end{split}
\end{equation}
Inserting~\eqref{T51} and \eqref{T52} in~\eqref{T51-T52},
we arrive at
\begin{equation} \label{T5}
T_5
\le C_5(\varepsilon) (\h^{2(k+1))} + \Norm{\eubf}{}^2 + \Norm{\eBbf}{}^2)
        + \varepsilon \Norm{\ejbf}{}^2.
\end{equation}

\paragraph*{Collecting the estimates.}
We collect~\eqref{T1}, \eqref{T2}, \eqref{T3}, \eqref{T4}, and~\eqref{T5} in~\eqref{T1-T2-T3-T4-T5},
and obtain
\begin{equation} \label{choosing-epsilon}
\RHS
\le C(\varepsilon) (\h^{2(k+1))} + \Norm{\eubf}{}^2) 
                + C \Norm{\eBbf}{}^2
                + \varepsilon \Norm{\curl \eubf}{}^2
                + \varepsilon \Norm{\ejbf}{}^2.
\end{equation}
Inserting this inequality in~\eqref{LHS-RHS}, using~\eqref{mixed-vanishes},
moving the last two terms on the right-hand side to the left-hand side,
and picking~$\varepsilon$ from Young's inequalities above sufficiently small,
for positive constants~$C_A$ and~$C_B$ only depending
on the data and the shape-regularity parameter~$\sigma$ of the mesh, we write
{ \small\[
\begin{split}
\frac12 \dt\Norm{\eubf}{}^2
 + \Remo \Norm{\curl\eubf}{}^2
 +  (\nabla\eP,\eubf)
 + \Norm{\ejbf}{}^2
 + \frac12 \dt\Norm{\eBbf}{}^2
\le C_A \h^{2(k+1)}
+ C_B (\Norm{\eubf}{}^2 + \Norm{\eBbf}{}^2) .
 \end{split}
\]}
We integrate in time and get
\[
\begin{split}
& \Norm{\eubf(t)}{}^2 
 + \Norm{\eBbf(t)}{}^2
 + \int_0^t \Norm{\curl \eubf(s)}{}^2 \ds
 + \int_0^t \Norm{\ejbf(s)}{}^2 \ds \\
& \quad \le \Norm{\eubf(0)}{}^2 
    + \Norm{\eBbf(0)}{}^2
    + \int_0^t C_A \h^{2(k+1)} \ds
    + \int_0^t C_A [\Norm{\eubf(s)}{}^2 + \Norm{\eBbf(s)}{}^2] \ds
\qquad \forall t \in (0,T].
\end{split}
\]
Applying Gronwall's inequality
\[
u(t) \le \alpha(t) + \int_0^t \beta(s) u(s) \ds
\quad \Longrightarrow \quad
u(t) \le  \alpha(t) + \int_0^t \alpha(s)\beta(s) \exp{\left(\int_s^0 \beta(r)\dr\right)} \ds,
\]
the assertion follows.
\end{proof}

A consequence of Theorem~\ref{theorem:partial-convergence} is ancillary
for proving error estimates for the semi-discrete scheme~\eqref{method-semidiscrete}.
\begin{cor} \label{corollary:a-priori-semidiscrete}
Consider sequences~$\{\taun\}$ of shape-regular, quasi-uniform meshes.
Let the solution to~\eqref{MHD-weak} be sufficiently smooth.
Then, there exists a positive constant~$C$ independent of~$\h$
such that, for all~$t$ in~$(0,T]$,
\[
\begin{split}
& \Norm{(\ubf-\ubfh)(t)}{}
+ \Norm{(\Bbf-\Bbfh)(t)}{}
+ \Big(\int_0^t \Norm{\curl (\ubf-\ubfh)(s)}{}^2 \ds \Big)^\frac12
+ \Big( \int_0^t \Norm{(\jbf-\jbfh)(s)}{}^2 \ds \Big)^\frac12 \\
& \le C \Big(  \Norm{\eubf(0)}{} + \Norm{\eBbf(0)}{} + \h^{k+1} \Big).
\end{split}
\]
The constant~$C$ includes regularity terms of the solution to~\eqref{MHD-weak},
the shape-regularity parameter~$\sigma$ of the mesh,
and the polynomial degree~$k$.
\end{cor}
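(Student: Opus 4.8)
The plan is to deduce the bounds on the genuine errors~$\ubf-\ubfh$, $\Bbf-\Bbfh$, $\curl(\ubf-\ubfh)$, and~$\jbf-\jbfh$ from the bounds on the auxiliary quantities~$\eubf$, $\eBbf$, $\curl\eubf$, and~$\ejbf$ proven in Theorem~\ref{theorem:partial-convergence}, by adding and subtracting the relevant projections/interpolants and invoking the approximation estimates collected in Section~\ref{subsection:discrete-spaces}.

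First I would write~$\ubf-\ubfh = (\ubf-\PiNcalu) - \eubf$ and~$\Bbf-\Bbfh = (\Bbf-\BbfI) - \eBbf$, and apply the triangle inequality. The projection errors~$\Norm{(\ubf-\PiNcalu)(t)}{}$ and~$\Norm{(\Bbf-\BbfI)(t)}{}$ are~$O(\h^{k+1})$ by Propositions~\ref{proposition:PiNcalEbf-estimates} and~\ref{proposition:Hdiv-approx}, while~$\Norm{\eubf(t)}{}$ and~$\Norm{\eBbf(t)}{}$ are precisely what Theorem~\ref{theorem:partial-convergence} controls; this settles the first two summands. For the~$\curl$ term I would split~$\curl(\ubf-\ubfh) = \curl(\ubf-\PiNcalu) - \curl\eubf$, use Proposition~\ref{proposition:PiNcalEbf-estimates} to obtain~$\int_0^t\Norm{\curl(\ubf-\PiNcalu)(s)}{}^2\,\ds \le C\,T\,\h^{2(k+1)}$, and bound~$\int_0^t\Norm{\curl\eubf(s)}{}^2\,\ds$ directly via Theorem~\ref{theorem:partial-convergence}.

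The term~$\jbf-\jbfh$ is the only one that is not an interpolation error, and I expect it to be the (mild) main obstacle, since~$\jbfh$ is not defined as an interpolant of~$\jbf$. The key observation is the algebraic identity~$\jbfh = \Rm\,\ejbf + \jbfI$, which follows immediately from~\eqref{6.6} and the definitions in~\eqref{j-quantities} using the linearity of~$\chih$ in its first argument; hence~$\jbf-\jbfh = (\jbf-\jbfI) - \Rm\,\ejbf$. The contribution~$\int_0^t\Norm{\ejbf(s)}{}^2\,\ds$ is part of the statement of Theorem~\ref{theorem:partial-convergence}, while for~$\int_0^t\Norm{(\jbf-\jbfI)(s)}{}^2\,\ds$ I would invoke the already-established bound~\eqref{interpolation-J-estimates}, integrate it in time, and control the resulting~$\int_0^t\Norm{\eBbf(s)}{}^2\,\ds$ by~$T\sup_{s\in(0,t)}\Norm{\eBbf(s)}{}^2$, again using Theorem~\ref{theorem:partial-convergence}.

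Collecting the four estimates and absorbing the harmless factors of~$T$ into the constant yields the claim. Apart from the~$\jbf-\jbfh$ manipulation, every step is a routine combination of the triangle inequality, the approximation results of Section~\ref{subsection:discrete-spaces}, and Theorem~\ref{theorem:partial-convergence}; in particular, no Gronwall argument is needed here, since the time integration has already been carried out in the proof of that theorem.
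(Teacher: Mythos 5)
Your proposal is correct and follows essentially the same route as the paper: triangle inequality splitting each error into an interpolation/projection error plus the corresponding $e$-quantity, then invoking Theorem~\ref{theorem:partial-convergence} together with estimates~\eqref{PiNcalEbf-estimates}, \eqref{Hdiv-approx}, and~\eqref{interpolation-J-estimates}. Your explicit identity $\jbfh = \Rm\,\ejbf + \jbfI$ is exactly the splitting implicit in the paper's treatment of the $\jbf-\jbfh$ term, so there is nothing to add.
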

\begin{proof}
The triangle inequality implies
\[
\begin{split}
& \Norm{(\ubf-\ubfh)(t)}{}
+ \Norm{(\Bbf-\Bbfh)(t)}{}
+ \Big(\int_0^t \Norm{\curl (\ubf-\ubfh)(s)}{}^2 \ds \Big)^\frac12
+ \Big( \int_0^t \Norm{(\jbf-\jbfh)(s)}{}^2 \ds \Big)^\frac12 \\
& \le C \Big[\Norm{(\ubf-\PiNcalu)(t)}{}
    + \Norm{(\Bbf-\Pitildez \Bbf)(t)}{}
    + \Big(\int_0^t \Norm{\curl (\ubf-\PiNcalu)(s)}{}^2 \ds \Big)^\frac12\\
& \qquad\qquad  + \Big( \int_0^t \Norm{(\jbf-\jbfI)(s)}{}^2 \ds \Big)^\frac12 
 + \Norm{\eubf(t)}{} 
 + \Norm{\eBbf(t)}{} \\
& \qquad\qquad + \Big( \int_0^t \Norm{\curl \eubf(s)}{}^2 \ds \Big)^\frac12
 + \Big( \int_0^t \Norm{\ejbf(s)}{}^2 \ds \Big)^\frac12 \Big] .
\end{split}
\]
The assertion follows combining Theorem~\ref{theorem:partial-convergence},
and estimates~\eqref{PiNcalEbf-estimates},
\eqref{Hdiv-approx},
and~\eqref{interpolation-J-estimates}.
\end{proof}

Using estimates~\eqref{PiNcalEbf-estimates}
and~\eqref{Hdiv-approx} to handle the initial data error
implies convergence for the velocity, magnetic field, vorticity,
and electric density unknowns.
Convergence for the other three variables can be deduced
from the relations in~\eqref{strong-identities}.
Computational tests can be found in~\cite{Hu-Lee-Xu:2021}
and are in agreement with the theoretical findings above.

\paragraph*{Acknowledgements.}
LBdV and LM have been partially funded by the European Union
(ERC, NEMESIS, project number 101115663).
Views and opinions expressed are however those of the author(s) only
and do not necessarily reflect those of the EU or the ERC Executive Agency.
LM has been also partially funded by MUR (PRIN2022 research grant n. 202292JW3F).
LM and LBdV are members of the Gruppo Nazionale Calcolo Scientifico-Istituto
Nazionale di Alta Matematica (GNCS-INdAM).
The work of KH was supported by a Royal Society University Research Fellowship
(URF$\backslash$R1$\backslash$221398).


{\footnotesize
\bibliography{bibliogr}

\begin{thebibliography}{10}

\bibitem{Arnold-Keshin:2009}
V.~I. Arnold and B.~A. Khesin.
\newblock {\em Topological methods in hydrodynamics}, volume~19.
\newblock Springer, 2009.

\bibitem{BeiraodaVeiga-Dassi-Manzini-Mascotto:2023}
L.~Beir\~ao~da Veiga, F.~Dassi, G.~Manzini, and L.~Mascotto.
\newblock The virtual element method for the {3D} resistive magnetohydrodynamic model.
\newblock {\em Math. Models Methods Appl. Sci.}, 33(3):643--686, 2023.

\bibitem{BeiraodaVeiga-Dassi-Vacca:2024B}
L.~Beir\~{a}o~da Veiga, F.~Dassi, and G.~Vacca.
\newblock Pressure and convection robust finite elements for magnetohydrodynamics.
\newblock \url{https://arxiv.org/abs/2405.05434}, 2024.

\bibitem{BeiraodaVeiga-Dassi-Vacca:2024}
L.~Beir\~{a}o~da Veiga, F.~Dassi, and G.~Vacca.
\newblock Robust finite elements for linearized magnetohydrodynamics.
\newblock {\em SIAM J. Numer. Anal.}, 62(4):1539--1564, 2024.

\bibitem{Boffi-Brezzi-Fortin:2013}
D.~Boffi, F.~Brezzi, and M.~Fortin.
\newblock {\em Mixed finite element methods and applications}, volume~44.
\newblock Springer, 2013.

\bibitem{Boffi-Gastaldi:2006}
D.~Boffi and L.~Gastaldi.
\newblock Interpolation estimates for edge finite elements and application to band gap computation.
\newblock {\em Appl. Numer. Math.}, 56(10-11):1283--1292, 2006.

\bibitem{Buffa-Ciarlet:2001}
A.~Buffa and P.~Ciarlet.
\newblock On traces for functional spaces related to {M}axwell's equations {P}art {II}: {H}odge decompositions on the boundary of {L}ipschitz polyhedra and applications.
\newblock {\em Math. Methods Appl. Sci.}, 24(1):31--48, 2001.

\bibitem{Ciarlet:2002}
Ph.~G. Ciarlet.
\newblock {\em The finite element method for elliptic problems}.
\newblock SIAM, 2002.

\bibitem{Constantin-Weinan-Titi:1994}
P.~Constantin, E.~Weinan, and E.~S. Titi.
\newblock Onsager's conjecture on the energy conservation for solutions of {E}uler's equation.
\newblock {\em Comm. Math. Phys.}, 165(1):207--209, 1994.

\bibitem{Douglas-Dupon-Wahlbin:1974}
J.~Douglas, T.~Dupont, and L.~Wahlbin.
\newblock The stability in ${L}^q$ of the ${L}^2$ projection into finite element function spaces.
\newblock {\em Numer. Math.}, 23:193--197, 1974.

\bibitem{Ern-Guermond:2017}
A.~Ern and J.-L. Guermond.
\newblock Finite element quasi-interpolation and best approximation.
\newblock {\em ESAIM Math. Model. Numer. Anal.}, 51(4):1367--1385, 2017.

\bibitem{Ern-Guermond:2021A}
A.~Ern and J.-L. Guermond.
\newblock {\em Finite elements {I}: {A}pproximation and interpolation}, volume~72.
\newblock Springer Nature, 2021.

\bibitem{Fehn-Kronbichler-Munch-Wall:2022}
N.~Fehn, M.~Kronbichler, P.~Munch, and W.~A. Wall.
\newblock Numerical evidence of anomalous energy dissipation in incompressible {R}uler flows: towards grid-converged results for the inviscid {T}aylor--{G}reen problem.
\newblock {\em J. Fluid Mech.}, 932:A40, 2022.

\bibitem{Gao-Qiu-Sun:2023}
H.~Gao, W.~Qiu, and W.~Sun.
\newblock New analysis of mixed {FEM}s for dynamical incompressible magnetohydrodynamics.
\newblock {\em Numer. Math.}, 153(2):327--358, 2023.

\bibitem{Gawlik-GayBalmaz:2022}
E.~S. Gawlik and F.~Gay-Balmaz.
\newblock {A finite element method for MHD that preserves energy, cross-helicity, magnetic helicity, incompressibility, and $\operatorname{div} B = 0$}.
\newblock {\em J. Comput. Phys.}, 450:110847, 2022.

\bibitem{Gerbeau-LeBris-Lelievre:2006}
J.-F. Gerbeau, C.~Le~Bris, and T.~Leli{\`e}vre.
\newblock {\em Mathematical methods for the magnetohydrodynamics of liquid metals}.
\newblock Clarendon Press, 2006.

\bibitem{Gunzburger-Meir-Peterson:1991}
M.~D. Gunzburger, A.~J. Meir, and J.~S. Peterson.
\newblock On the existence, uniqueness, and finite element approximation of solutions of the equations of stationary, incompressible magnetohydrodynamics.
\newblock {\em Math. Comp.}, 56(194):523--563, 1991.

\bibitem{He-Hu-Farrell:preparation}
M.~He, K.~Hu, and P.~E. Farrell.
\newblock Topological-preserving computation for magnetic relaxation.
\newblock {\em In preparation}, 2024.

\bibitem{Hiptmair-Li-Mao-Zheng:2018}
R.~Hiptmair, L.~Li, S.~Mao, and W.~Zheng.
\newblock A fully divergence-free finite element method for magnetohydrodynamic equations.
\newblock {\em Math. Models Methods Appl. Sci.}, 28(04):659--695, 2018.

\bibitem{Hu-Lee-Xu:2021}
K.~Hu, Y.-J. Lee, and J.~Xu.
\newblock Helicity-conservative finite element discretization for incompressible {MHD} systems.
\newblock {\em J. Comput. Phys.}, 436:Paper No. 110284, 17, 2021.

\bibitem{Hu-Ma-Xu:2017}
K.~Hu, Y.~Ma, and J.~Xu.
\newblock Stable finite element methods preserving {$\nabla \cdot \mathbf{B}= 0$} exactly for {MHD} models.
\newblock {\em Numer. Math.}, 135(2):371--396, 2017.

\bibitem{Hu-Qiu-Shi:2020}
K.~Hu, W.~Qiu, and K.~Shi.
\newblock Convergence of a {BE} based finite element method for {MHD} models on {L}ipschitz domains.
\newblock {\em J. Comput. Appl. Math.}, 368:112477, 2020.

\bibitem{Hu-Xu:2019}
K.~Hu and J.~Xu.
\newblock Structure-preserving finite element methods for stationary {MHD} models.
\newblock {\em Math. Comp.}, 88(316):553--581, 2019.

\bibitem{Isett:2018}
P.~Isett.
\newblock A proof of {O}nsager's conjecture.
\newblock {\em Ann. Math.}, 188(3):871--963, 2018.

\bibitem{Laakmann-Hu-Farrell:2023}
F.~Laakmann, K.~Hu, and P.E. Farrell.
\newblock Structure-preserving and helicity-conserving finite element approximations and preconditioning for the {H}all {MHD} equations.
\newblock {\em J. Comput. Phys.}, 492:Paper No. 112410, 2023.

\bibitem{Lamb:1924}
H.~Lamb.
\newblock {\em Hydrodynamics}.
\newblock University Press, 1924.

\bibitem{Layton:2008}
W.~Layton.
\newblock {\em Introduction to the numerical analysis of incompressible viscous flows}.
\newblock SIAM, 2008.

\bibitem{Moffatt:1969}
H.~K. Moffatt.
\newblock The degree of knottedness of tangled vortex lines.
\newblock {\em J. Fluid Mech.}, 35(1):117--129, 1969.

\bibitem{Moffatt:1981}
H.~K. Moffatt.
\newblock Some developments in the theory of turbulence.
\newblock {\em J. Fluid Mech.}, 106:27--47, 1981.

\bibitem{Monk-2003}
P.~Monk.
\newblock {\em Finite element methods for {M}axwell's equations}.
\newblock Oxford University Press, 2003.

\bibitem{Onsager:1949}
L.~Onsager.
\newblock Statistical hydrodynamics.
\newblock {\em Nuovo Cimento Soc. Ital. Fis. B}, 6(Suppl 2):279--287, 1949.

\bibitem{Pontin-Hornig:2020}
D.~I. Pontin and G.~Hornig.
\newblock The {P}arker problem: existence of smooth force-free fields and coronal heating.
\newblock {\em Living Rev. Sol. Phys.}, 17(1):5, 2020.

\bibitem{Rebholz:2007}
L.~G. Rebholz.
\newblock An energy-and helicity-conserving finite element scheme for the {N}avier--{S}tokes equations.
\newblock {\em SIAM J. Numer. Anal.}, 45(4):1622--1638, 2007.

\bibitem{Schoetzau:2004}
D.~Sch{\"o}tzau.
\newblock Mixed finite element methods for stationary incompressible magneto-hydrodynamics.
\newblock {\em Numer. Math.}, 96(4):771--800, 2004.

\bibitem{Shvydkoy:2010}
R.~Shvydkoy.
\newblock Lectures on the {O}nsager conjecture.
\newblock {\em Discrete Contin. Dyn. Syst. Ser. S}, 3(3):473--496, 2010.

\bibitem{Zhang-Palha-Gerritsma-Rebholz:2022}
Y.~Zhang, A.~Palha, M.~Gerritsma, and L.~G. Rebholz.
\newblock A mass-, kinetic energy-and helicity-conserving mimetic dual-field discretization for three-dimensional incompressible {N}avier-{S}tokes equations, part {I}: {P}eriodic domains.
\newblock {\em J. Comput. Phys.}, 451:110868, 2022.

\end{thebibliography}
\bibliographystyle{plain}
}

\appendix
\section{Proof of Theorem~\ref{theorem:helicity-discrete}} \label{appendix:helicity}

\noindent\textbf{Proof of~\eqref{helicity-discrete-a}.}
The fact that~$\curl\Abfh=\Bbfh$,
the chain rule, and integrating by parts
imply
\small{\begin{equation} \label{dtBhAh}
\dt (\Bbfh,\Abfh)
= (\curl\dt\Abfh,\Abfh) + (\dt\Abfh,\Bbfh)
= (\dt\Abfh,\curl\Abfh) + (\dt\Abfh,\Bbfh)
= 2 (\dt\Abfh,\Bbfh).
\end{equation}}\normalsize
From~\eqref{relation-Bh-curlEh}, we have~$\dt\Bbfh=-\curl\Ebfh$.
Since~$\dt \Bbfh = \curl (\dt\Abfh)$ by definition,
we deduce~$\curl(\dt\Abfh+\Ebfh) =0$.
This entails the existence of~$\Phih$ in~$\Hznablah$ such that
\[
\dt\Abfh = - \Ebfh - \nabla \Phih .
\]
Testing this identity with~$\Bbfh$,
and using an integration by parts
and the fact that~$\div\Bbfh=0$ for all times,
see~\eqref{discrete-magnetic-zero-divergence},
we can write
\begin{equation} \label{dtAh-Eh}
(\dt\Abfh,\Bbfh)
= - (\Ebfh+\nabla\Phih, \Bbfh)
= - (\Ebfh,\Bbfh).
\end{equation}
On the other hand, we have
\begin{equation} \label{rewriting-Eh}
\Ebfh
\overset{\eqref{strong-identities-a},\eqref{strong-identities-d}}{=}
    \Rmmo \jbfh - \Qbbh(\ubfh \times \Qbbh\Bbfh).
\end{equation}
Consequently, we write
\[
\begin{split}
(\Ebfh,\Bbfh)
& = \Rmmo (\Bbfh,\jbfh) - (\Qbbh(\ubfh\times\Qbbh\Bbfh), \Bbfh) \\
& \overset{\eqref{L2-projection-curl}}{=}
    \Rmmo (\Bbfh,\jbfh) - (\ubfh\times\Qbbh\Bbfh, \Qbbh \Bbfh)
  \overset{\eqref{L2-projection-curl},\eqref{trilinear:vanishes}}{=}
  \Rmmo (\Qbbh\Bbfh,\jbfh)
  \overset{\eqref{strong-identities-d}}{=} \Rmmo \mu (\Hbfh,\jbfh).
\end{split}
\]
Plugging this into~\eqref{dtAh-Eh},
then using the resulting identity in~\eqref{dtBhAh} yields~\eqref{helicity-discrete-a}.
\medskip

\noindent \textbf{Proof of~\eqref{helicity-discrete-b}.}
Observe that
\[
-(\jbfh \times \Hbfh, \Qbbh \Bbfh)
\overset{\eqref{strong-identities-c},\eqref{strong-identities-d}}{=}
- \mu^{-2}  ((\curlh\Bbfh) \times \Qbbh\Bbfh, \Qbbh \Bbfh)
\overset{\eqref{trilinear:vanishes}}{=} 0.
\]
We take~$\vbfh = \Qbbh \Bbfh$ in~\eqref{method-semidiscrete-a},
use~\eqref{strong-identities-b}
and the properties of the cross product,
and get
\begin{equation} \label{dtuhBh}
(\dt\ubfh,\Bbfh)
+ ((\Qbbh(\curl\ubfh))\times\ubfh, \Qbbh\Bbfh)
+ \Remo a(\ubfh, \Qbbh\Bbfh)
+ (\nabla\Ph, \Qbbh \Bbfh)
= (\fbf, \Qbbh \Bbfh).
\end{equation}
Using~\eqref{rewriting-Eh}, we deduce
\begin{equation} \label{dtBh}
\dt\Bbfh
\overset{\eqref{relation-Bh-curlEh}}{=}
-\curl\Ebfh
\overset{\eqref{rewriting-Eh}}{=}
- \Rmmo \curl \jbfh + \curl\Qbbh (\ubfh \times \Qbbh\Bbfh).
\end{equation}
On the other hand, the exact sequence's properties imply
\[
- (\nabla\Ph, \Qbbh\Bbfh) 
\overset{\eqref{L2-projection-curl}}{=} - (\nabla\Ph, \Bbfh)
= (\Ph, \div\Bbfh)
\overset{\eqref{discrete-magnetic-zero-divergence}}{=}0.
\]
We arrive at
\[
\begin{split}
\dt(\ubfh,\Bbfh)
& = (\dt\ubfh,\Bbfh) + (\ubfh, \dt\Bbfh) \\
& \overset{\eqref{dtuhBh},\eqref{dtBh}}{=}
   - (\Qbbh(\curl\ubfh)\times\ubfh, \Qbbh\Bbfh) 
        - \Remo a(\ubfh, \Qbbh\Bbfh)
        + (\fbf, \Qbbh\Bbfh)\\
& \qquad\qquad - \Rmmo (\ubfh, \curl\jbfh)
                + (\ubfh, \curl\Qbbh(\ubfh\times\Qbbh\Bbfh)) .
\end{split}
\]
The cross product's properties entail
\[
- (\Qbbh(\curl\ubfh)\times\ubfh, \Qbbh\Bbfh) 
 + (\ubfh, \curl\Qbbh(\ubfh\times\Qbbh\Bbfh)) =0.
\]
Identity~\eqref{helicity-discrete-b} follows
combining the two equations above.
\end{document}